\patchcmd{\ttlh@hang}{\parindent\z@}{\parindent\z@\leavevmode}{}{}
\patchcmd{\ttlh@hang}{\noindent}{}{}{}
\numberwithin{equation}{section}
\newtheorem{theo}{Theorem}[section]
\newtheorem{lem}[theo]{Lemma}
\newtheorem{prop}[theo]{Proposition}
\newtheorem{cor}[theo]{Corollary}
\newtheorem{rem}[theo]{Remark}
\newtheorem{defn}[theo]{Definition}
\begin{document}

\begin{frontmatter}



\title{Eulerian droplet model: Delta-shock waves and solution of the Riemann problem}

\author[Author1]{Sana Keita}    \ead{skeit085@uottawa.ca}
\author[Author1]{Yves Bourgault\corref{cor1}} \ead{ybourg@uottawa.ca}
\cortext[cor1]{Corresponding author.}
\address[Author1]{Department of Mathematics and Statistics, University of Ottawa,\\ K1N 6N5 , Ottawa, Ontario, Canada}

\begin{abstract}
We study an Eulerian droplet model which can be seen as the pressureless gas system with a source term, a subsystem of this model and the inviscid Burgers equation with source term. The condition for loss of regularity of a solution to Burgers equation with source term is established. The same condition applies to the Eulerian droplet model and its subsystem. The Riemann problem for the Eulerian droplet model is constructively solved by going through the solution of the Riemann problems for the inviscid Burgers equation with a source term and the subsystem, respectively. Under suitable generalized Rankine-Hugoniot relations and entropy condition, the existence  of delta-shock solution is  established. The existence of a solution to the generalized Rankine-Hugoniot conditions is proven. Some numerical illustrations are presented.
\end{abstract}

\begin{keyword}
Eulerian droplet model \sep zero-pressure gas dynamics \sep Burgers equation \sep Source term \sep Blowup \sep Delta-shock waves \sep Generalized Rankine-Hugoniot conditions




\end{keyword}

\end{frontmatter}

\section{Introduction}
\label{SectEulDropMod}
In this paper we consider the one-dimensional Eulerian droplet model \cite{Bourgault1} in conservative form 
\begin{equation}
\left\lbrace
\begin{aligned}
&\partial_t\alpha+\partial_x(\alpha u)=0,\\
&\partial_t(\alpha u)+\partial_x(\alpha u^2)=\mu\alpha(u_a-u),
\end{aligned}
\right.
\label{DropModSimp}
\end{equation}
where $\alpha$ and $u$ denote, the volume fraction and velocity of the particles (droplets), respectively, $u_a$ is the velocity of the carrier fluid (air), and $\mu$ is the drag coefficient between the carrier fluid  and the particles. Since the density of particles exceeds the air density by orders of magnitude, the virtual mass force is neglected. The lift force, gravity, and other interfacial effects are also negligible when compared to the viscous drag force. These forces could be important in other applications \cite{Bourgault1}.  The Eulerian droplet model \eqref{DropModSimp} corresponds to a dispersed phase subsystem in its simplest form, for instance a multi-phase system for particles suspended in a carrier fluid.  
For smooth solutions, the second equation of \eqref{DropModSimp} is equivalent to
\begin{equation}
\alpha(\partial_tu+u\partial_xu)+u(\partial_t\alpha+\partial_x(\alpha u))=\mu\alpha(u_a-u).
\label{EquivUSmoothSol1}
\end{equation}
Using the first equation of \eqref{DropModSimp} and  simplifying by $\alpha\neq0$, \eqref{EquivUSmoothSol1} reduces to
\begin{equation}
\partial_tu+u\partial_x u=\mu(u_a-u)
\label{EquivUSmoothSol2}
\end{equation}
which can be rewritten in conservative form as
\begin{equation}
\partial_tu+\partial_x(\frac{1}{2}u^2)=\mu(u_a-u).
\label{BurgersSourceTerm}
\end{equation}
Hence, for smooth solutions with $\alpha\neq0$, the Eulerian droplet model \eqref{DropModSimp} is equivalent to
\begin{equation}
\left\lbrace
\begin{aligned}
&\partial_t\alpha+\partial_x(\alpha u)=0,\\
&\partial_tu+\partial_x(\frac{1}{2}u^2)=\mu(u_a-u),
\end{aligned}
\right.
\label{DropModSimpAlphaU}
\end{equation}
One easily shows that any smooth solution of \eqref{DropModSimpAlphaU} is also a solution to \eqref{DropModSimp}. In the following, equation \eqref{BurgersSourceTerm} will be also referred to as the \textit{inviscid Burgers equation with source term}.\\

If $\mu=0$ then \eqref{BurgersSourceTerm} reduces to the classical \textit{inviscid Burgers equation} which has been studied in most textbooks on conservation laws \cite{Raviart,Lax,Serre,Smoller}. It is well known that the solution of the inviscid Burgers equation develops  discontinuities in finite time provided that the slope of the initial condition is negative at some point. 

For the homogeneous case $\mu=0$, system \eqref{DropModSimp} can be seen as the \textit{zero-pressure gas dynamics system} \cite{Bouchut2} or  as the \textit{sticky particle system} \cite{Sticky,weinan1996} that arises in the modeling of particles hitting and sticking to each other to explain the formation of large scale structures in the universe. The system of zero-pressure gas dynamics has been studied by several authors \cite{Bouchut2,Bouchut3,LBoudin,Sticky,weinan1996,Huang2001,LIYANG,Zhang,Sheng}. In particular, the existence of measure solutions for the Riemann problem was first presented by Bouchut \cite{Bouchut2}. Under suitable generalized Rankine-Hugoniot relation and entropy condition, the Riemann problem is constructively solved in \cite{Sheng}. 

If $\mu>0$ then  system \eqref{DropModSimp} is known as the \textit{Eulerian droplet model} \cite{Drop2,Bourgault2,Bourgault1,Bourgault4,Bourgault3,Drop3,Drop1}. This model is successfully used for the prediction of droplets impingement on airfoils and airplane wings during in-flight icing events \cite{Bourgault1,Bourgault3}. Extension to particle flows in airways was more recently attempted \cite{Bourgault2,Bourgault4}. The Eulerian droplet model has been studied  by several authors  at the numerical level \cite{Bourgault1} and at the practical level \cite{Drop2,Bourgault2,Bourgault4,Bourgault3,Drop3,Drop1}. To our knowledge, there is no theoretical study related to the system of zero-pressure gas dynamics including explicitly a right-hand side term as in \eqref{DropModSimp}. In this paper, we are interested in the theoretical study of the Eulerian droplet model \eqref{DropModSimp}. In reality, the drag coefficient $\mu$ is function of the droplet Reynolds number (see \cite{Bourgault1}). For performing analysis, we assume in the following that the drag coefficient $\mu$ and the carrier fluid  velocity $u_a$ are constant.

The Eulerian droplet model \eqref{DropModSimp} is a first-order system of conservation laws for the volume fraction $\alpha$ and the momentum $\alpha u$. For smooth solutions, it is equivalent to \eqref{DropModSimpAlphaU} which can be written in quasilinear form as
\begin{equation}
\begin{pmatrix}
\alpha\\
u
\end{pmatrix}_t
+
\begin{pmatrix}
u & \alpha\\
0 & u
\end{pmatrix}
\begin{pmatrix}
\alpha\\
u
\end{pmatrix}_x
=
\begin{pmatrix}
0\\
\mu(u_a-u)
\end{pmatrix}.
\end{equation}
The Jacobian matrix has one double eigenvalue $u$ and is not diagonalizable. Hence, system \eqref{DropModSimp} is \textit{weakly hyperbolic}. Systems of conservation laws in which hyperbolicity fails (because of eigenvalue coincidence) can encounter many difficulties, particularly in terms of boundedness of their solutions. To illustrate this recurrent difficulty with boundedness, consider the following linear first-order system  
\begin{eqnarray}
\left\lbrace
\begin{aligned}
&\begin{pmatrix}
\alpha\\
u
\end{pmatrix}_t
+
\begin{pmatrix}
\lambda & \beta \\
0 & \lambda
\end{pmatrix}
\begin{pmatrix}
\alpha\\
u
\end{pmatrix}_x
=0,\quad (x,t)\in \mathbb{R}\times\mathbb{R}^+,\\
&(\alpha,u)(x,0)=(\alpha_0,u_0)(x),\quad \forall x\in \mathbb{R},
\end{aligned}
\right.
\label{SHDIllus}
\end{eqnarray}
where $\lambda,\beta\neq 0$ are constant. System \eqref{SHDIllus} is weakly hyperbolic with one double eigenvalue $\lambda$. One can first solve the second equation of \eqref{SHDIllus} by the method of characteristics to find
\begin{equation}
u(x,t)=u_0(x-\lambda t)
\label{S1}
\end{equation} 
and then, considering $-\beta\partial_xu$ as a source term, we calculate the solution of the first equation
\begin{equation}
\alpha(x,t)=\alpha_0(x-\lambda t) - \beta tu_0^{\prime}(x-\lambda t).
\label{S2}
\end{equation} 
We immediately see that $\alpha$ is not defined in the classical sense at points where the initial condition $u_0$ is not differentiable. For instance, if $u_0$ is a Heaviside function then the expression for $\alpha$ would contain a Dirac $\delta$-function. The Cauchy problem, for bounded measurable data, is not of classical type. The concept of singular solutions incorporating Dirac $\delta$-functions along shock trajectories was first introduced in \cite{Korchinski}. Tan and Zhang \cite{TAN19941} studied a new type of waves,  delta-shock waves, as solution of nonlinear hyperbolic systems for which hyperbolicity fails. They proved that delta-shock waves are limit solutions to some reasonable viscous perturbations as the viscosity vanishes. A delta-shock wave is a generalization of an ordinary shock wave. Speaking informally, it is a kind of discontinuity, on which at least one of the state variables may develop an extreme concentration in the form of a Dirac $\delta$-function with the discontinuity in an other variable as its support. From the physical point of view, a delta-shock wave represents the process of concentration of mass. For related results on delta-shock waves, we refer to  \cite{Vacuum,CHEN2004,HCheng,KEYFITZ1995420,LIYANG,Sheng,YANG2014,YANG20125951,YIN2009} and the references therein.

The general purpose of this work is to solve the Riemann problem for the Eulerian droplet model \eqref{DropModSimp}. The arrangement of this paper is as follows. In section \ref{PerteReg}, we derive the condition for loss of regularity for a smooth solution of \eqref{DropModSimp}. In sections \ref{RPBurgersSourceTerm}, \ref{SectRPDropModAlphaU} and \ref{SectRiemProbDropModSimp}, we solve the Riemann problem for the inviscid Burgers equation with source term \eqref{BurgersSourceTerm}, system \eqref{DropModSimpAlphaU} and the Eulerian droplet model \eqref{DropModSimp}, respectively. In section \ref{GRHExisSol}, we investigate the existence of a solution to the generalized Rankine-Hugoniot conditions for \eqref{DropModSimp}. Test cases illustrating theoretical results are presented in section \ref{DropModSimpNumRes}.

\section{Loss of regularity for a smooth solution of the Eulerian droplet model}
\label{PerteReg} 
This section is devoted to the loss of regularity for smooth solutions of \eqref{DropModSimp}. For more details on loss of regularity for smooth solutions, we refer the readers to the work \cite{Serge,YANG1999447} on blowup of nonlinear hyperbolic systems, and to Whitham's classic text \cite{Whitham}. By a method similar to that in \cite{YANG1999447}, we prove that $\alpha$ and $\partial_xu$ blow up simultaneously in finite time even starting from smooth initial data.
 
Let $(\alpha,u)$ be a smooth solution of \eqref{DropModSimp} satisfying the initial condition
\begin{equation}
(\alpha,u)(x,0)=(\alpha_0,u_0)(x), \quad \alpha_0, u_0 \in \mathcal{C}^1(\mathbb{R}).
\label{SmoothInitCond}
\end{equation}
The characteristic curves $\chi=\chi(x,t;s)$ associated to \eqref{DropModSimp} are solutions of 
\begin{eqnarray}
\left\lbrace
  \begin{aligned}
    &\dfrac{d\chi}{ds}(x,t;s)=u\big(\chi(x,t;s),s\big),\quad s\in[0,T],\\
    &\chi(x,t;t)=x.
   \end{aligned}
\right.
\label{Caracteristique}
\end{eqnarray}
System \eqref{DropModSimp} reduces along these characteristics to
\begin{eqnarray}
\left\lbrace
  \begin{aligned}
    &\dfrac{D\alpha}{dt}=\alpha\partial_xu,\\
    &\dfrac{Du}{dt}=\mu(u_a-u),
   \end{aligned}
\right.
\label{DropModOnCharac}
\end{eqnarray}
where $\frac{D}{dt}=\frac{\partial}{\partial t}+u\frac{\partial}{\partial x}$ is the total derivative.
An integration of the second equation of \eqref{DropModOnCharac} gives
\begin{equation}
u\big(\chi(x,t;t),t\big)=u(x,t)=u_a + \big(u_0(x_0)-u_a\big)e^{-\mu t},
\label{sol}
\end{equation}
where $x_0=\chi(x,t;0)$. Substituting (\ref{sol}) in (\ref{Caracteristique}) and integrating, we  get
\begin{equation}
x=\chi(x,t;t)=x_0+u_at+\dfrac{(u_0(x_0)-u_a)(1-e^{-\mu t})}{\mu}.
\label{Xfunction}
\end{equation}
Hence, $x=x(x_0,t)$ can be seen as a function of $x_0$ and $t$, and thus $\partial_xu$ can be written as 
\begin{equation}
\partial_xu=\Big(\dfrac{\partial u}{\partial x_0}\dfrac{\partial x_0}{\partial x}+\dfrac{\partial u}{\partial t}\dfrac{\partial t}{\partial x}\Big).
\label{VolFracONCharac22}
\end{equation} 
As long as the characteristics do not intersect, the map
\begin{align*}
h: \mathbb{R}\times[0,\infty)&\rightarrow \mathbb{R}\times[0,\infty)\\
(x_0,t)\quad&\mapsto\quad(x,t)
\end{align*}
is bijective. The Jacobian matrix of $h$ and its inverse $h^{-1}$ are given by
\begin{equation}
J_h(x_0,t)=
\begin{pmatrix}
\frac{\partial x}{\partial x_0} & \frac{\partial x}{\partial t}  \\
0 & 1  
\end{pmatrix}
\quad \text{and} \quad 
J_{h^{-1}}(x,t)=
\begin{pmatrix}
\frac{\partial x_0}{\partial x} & \frac{\partial x_0}{\partial t} \\
\frac{\partial t}{\partial x} & 1
\end{pmatrix},
\end{equation}
respectively.  Since $J_{h^{-1}}(x,t)=J_h^{-1}(x_0,t)$  then
\begin{equation}
\frac{\partial x_0}{\partial x}=\frac{1}{\frac{\partial x}{\partial x_0}} \quad\text{and}\quad \frac{\partial t}{\partial x}=0.
\end{equation}
Hence, \eqref{VolFracONCharac22} reduces to
\begin{equation}
\partial_xu=\dfrac{\partial u}{\partial x_0}\dfrac{\partial x_0}{\partial x}=-\alpha\dfrac{\partial u}{\partial x_0}\dfrac{1}{\frac{\partial x}{\partial x_0}}.
\label{VolFracONCharac1}
\end{equation} 
Using \eqref{sol} and \eqref{Xfunction} in \eqref{VolFracONCharac1}, we obtain
\begin{equation}
\partial_xu=\dfrac{\mu e^{-\mu t}u_0'(x_0)}{\mu+(1-e^{-\mu t})u_0'(x_0)}.
\label{GradientU}
\end{equation}
The first equation of \eqref{DropModOnCharac} can be written now as 
\begin{equation}
\dfrac{D\alpha}{dt}=-\dfrac{\mu\alpha e^{-\mu t}u_0'(x_0)}{\mu+(1-e^{-\mu t})u_0'(x_0)}.
\label{VolFracONCharac}
\end{equation}
Assuming $\alpha\neq0$, one can  divide by $\alpha$ and integrate \eqref{VolFracONCharac} on both sides to obtain
\begin{equation}
\log\big(\alpha(x,t)\big)=-\log\big(K_D +(1-e^{-K_Dt})u_0'(x_0)\big)+\log(K_D)+\log\big(\alpha_0(x_0)\big).
\end{equation}
This last equality leads to
\begin{equation}
\alpha(x,t)=\dfrac{\mu\alpha_0(x_0)}{\mu +(1-e^{-\mu t})u_0'(x_0)}.
\label{SolVolFRacOnCharac}
\end{equation}
The following result holds:
\begin{prop}
\label{LossInitReg}
Let $(\alpha,u)$ be a smooth solution of \eqref{DropModSimp} and \eqref{SmoothInitCond}. Then $\alpha$ and $\partial_xu$ blow up if and only if there exists $x_0$ in the domain such that 
\begin{equation}
u_0'(x_0)<-\mu.
\label{NesSufCondLosReg}
\end{equation}
Moreover, the blowup occurs at 
\begin{equation}
t=\inf_{u_0'(x_0)<-\mu} \Big\lbrace-\dfrac{\log(1+\frac{\mu}{u_0'(x_0)})}{\mu}\Big\rbrace.
\label{LossRegTime}
\end{equation} 
\end{prop}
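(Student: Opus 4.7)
The plan is to read the blowup condition directly from the closed-form expressions already derived along the characteristics: $\partial_x u$ from \eqref{GradientU} and $\alpha$ from \eqref{SolVolFRacOnCharac} share the common denominator
\begin{equation*}
D(x_0,t) := \mu + (1-e^{-\mu t})\,u_0'(x_0).
\end{equation*}
Differentiating \eqref{Xfunction} with respect to $x_0$ gives $\partial x/\partial x_0 = D(x_0,t)/\mu$, so the map $h$ remains a diffeomorphism, and hence the smooth solution persists, exactly as long as $D(x_0,t)>0$ for every $x_0$. Consequently, blowup of $\alpha$ and of $\partial_x u$ can occur only at the first instant at which $D(x_0,\cdot)$ vanishes for some $x_0$, and at such an instant both quantities diverge provided the numerators $\mu\alpha_0(x_0)$ and $\mu e^{-\mu t}u_0'(x_0)$ are nonzero.

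The core of the argument is then the elementary analysis of $D(x_0,t)=0$, equivalently
\begin{equation*}
e^{-\mu t}=1+\dfrac{\mu}{u_0'(x_0)}.
\end{equation*}
For $t>0$ the left-hand side lies in $(0,1)$, so a solution $t>0$ exists if and only if $1+\mu/u_0'(x_0)\in(0,1)$, which is equivalent to $u_0'(x_0)<-\mu$. Conversely, if $u_0'(x_0)\geq-\mu$ for every $x_0$, then $(1-e^{-\mu t})u_0'(x_0)>-\mu$ for all $t>0$, so $D(x_0,t)>0$ and no blowup occurs. Under \eqref{NesSufCondLosReg}, solving for $t$ yields $t(x_0)=-\mu^{-1}\log\bigl(1+\mu/u_0'(x_0)\bigr)>0$, and the first blowup time is the infimum of $t(x_0)$ over all admissible $x_0$, which is precisely \eqref{LossRegTime}.

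The main technical point is confirming that the infimum above is an \emph{actual} blowup time. For any $x_0$ satisfying \eqref{NesSufCondLosReg}, the numerator of \eqref{GradientU} is nonzero at $t(x_0)$ because $u_0'(x_0)<-\mu<0$, and the numerator of \eqref{SolVolFRacOnCharac} is nonzero under the standing physical assumption $\alpha_0>0$; hence $|\partial_x u|$ and $\alpha$ both genuinely diverge along the corresponding characteristic at time $t(x_0)$. When the infimum in \eqref{LossRegTime} is attained, the conclusion is immediate; otherwise one picks a minimizing sequence $x_0^n$, notes that for any $T$ strictly smaller than the infimum one has $D(x_0,t)\geq c(T)>0$ uniformly on $\mathbb{R}\times[0,T]$, so the solution stays smooth on $[0,T]$, while for $t$ approaching the infimum $\sup_{x}|\partial_x u|(x,t)$ and $\sup_x \alpha(x,t)$ become unbounded. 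No deeper obstacle is expected; the whole argument reduces to tracking the sign of $D(x_0,t)$.
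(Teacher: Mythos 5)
Your proposal is correct and follows essentially the same route as the paper: both arguments reduce the question to the vanishing of the common denominator $\mu+(1-e^{-\mu t})u_0'(x_0)$ in \eqref{GradientU} and \eqref{SolVolFRacOnCharac}, observe that $e^{-\mu t}=1+\mu/u_0'(x_0)$ admits a solution $t>0$ exactly when $u_0'(x_0)<-\mu$, and take the infimum over admissible $x_0$ to get \eqref{LossRegTime}. You are somewhat more careful than the paper in checking that the numerators do not vanish (requiring $\alpha_0>0$) and in treating the case where the infimum is not attained, but these are refinements of the same argument rather than a different approach.
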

\begin{proof}
As $\alpha_0,u_0\in \mathcal{C}^1(\mathbb{R})$ then $\alpha$ and $\partial_xu$ blow up if and only if $\mu +(1-e^{-\mu t})u_0'(x_0)=0$. This happens if and only if 
\begin{equation}
u_0'(x_0)<0\quad\text{and}\quad 1-e^{-\mu t}=\dfrac{-\mu}{u_0'(x_0)}\Longleftrightarrow t=-\dfrac{\log(1+\frac{\mu}{u_0'(x_0)})}{\mu}.
\label{LossRegTimeProof}
\end{equation}
Since $1-e^{-\mu t}<1,\forall t\geq0$ then $u_0'(x_0)<-\mu$. The smallest time $t$ satisfying \eqref{LossRegTimeProof} is given by \eqref{LossRegTime}.
\end{proof}
\noindent
\begin{rem}
Inequality \eqref{NesSufCondLosReg} is also a necessary and sufficient condition for the characteristics to overlap. In fact, two characteristics $\chi_1(x,t;s)$ and $\chi_2(x,t;s)$ with distinct foots $x_1$ and $x_2$, respectively, cross each other if and only if there is $s^{*}>0$ such that $\chi_1(x,t;s^{*})=\chi_2(x,t;s^{*})$. By using \eqref{Xfunction} and the inequalities $0<1-e^{-\mu t}<1,\forall t>0$, the equality $\chi_1(x,t;s^{*})=\chi_2(x,t;s^{*})$ gives rise to
\begin{equation}
\dfrac{u_0(x_2)-u_0(x_1)}{x_2-x_1}<-\mu,
\label{CharacMeeting111}
\end{equation} 
 and by the mean value theorem, there exists a point $x_0$ such that
\begin{equation}
u_0'(x_0)=\dfrac{u_0(x_2)-u_0(x_1)}{x_2-x_1}<-\mu.
\label{CharacMeeting1}
\end{equation} 
\end{rem}

Thus, a smooth solution to \eqref{DropModSimp} loses its regularity if and only if the slope of the initial condition for $u$ is sufficiently negative with respect to the coefficient $\mu$.  This loss of regularity is reflected in a blowup of $\alpha$ and $\partial_xu$. This blowup leads to unboundedness and discontinuities in the solution. Therefore, no solution exists in the space of functions with bounded variation. We will investigate the form of the solution to the Riemann problem for \eqref{DropModSimp} by going through the solution of the Riemann problem for equation \eqref{BurgersSourceTerm} and then system \eqref{DropModSimpAlphaU}, respectively.

\section{Riemann problem for the inviscid Burgers equation with source term}
\label{RPBurgersSourceTerm}
In this section we study the inviscid Burgers equation with a zeroth order source term \eqref{BurgersSourceTerm} satisfying the initial condition
\begin{equation}
u(x,0)=u_0(x),
\label{InitCond}
\end{equation}
where  $u_0$ is a piecewise smooth function. The solution of the Riemann problem to the Burgers equation without a source term is either a rarefaction or a shock wave \cite{Lax,Smoller}. The solution to the Riemann problem for the Burgers equation with a discontinuous source term is constructed in \cite{FANG2012307}. It turns out that the discontinuity of the source term has clear influences on the shock or rarefaction waves generated by the initial Riemann data. In \cite{ZhangShen}, the shock wave solution for the inviscid Burgers equation with a linear forcing term is obtained by combining the Rankine-Hugoniot jump condition together with the method of characteristics, which reflects the impact of the inhomogeneous forcing term on the shock front. In these references, the term source is function of $x$ and $t$. In this section we solve the Riemann problem for the Burgers equation with a source term that depends on the solution $u$, using the method of characteristics. In addition, we refer to the  work \cite{ChangChouHongLin,mascia1997,SinestrariC} on how to use the method of characteristics to solve the Riemann problem for scalar conservation law with source term.

Due to the breaking of waves and formation of shocks, the initial value problem for \eqref{BurgersSourceTerm} does not generally possess globally defined smooth solutions, even when the initial data are very smooth. We showed in the previous section that  discontinuity appears in the solution of \eqref{BurgersSourceTerm}  if condition \eqref{NesSufCondLosReg} is satisfied.  Here, we look for the solution of the  Riemann problem for \eqref{BurgersSourceTerm}, i.e.\ the solution of \eqref{BurgersSourceTerm} and \eqref{InitCond}, where 
\begin{equation}
u_0(x)=
\left\lbrace
\begin{aligned}
&u_-,\quad x<0,\\
&u_+,\quad x>0,
\end{aligned}
\right.
\label{BurgersRPInitCond}
\qquad u_-,u_+ \in \mathbb{R}.
\end{equation}
We are particularly interested in the solution of the Riemann problem \eqref{BurgersSourceTerm} and \eqref{BurgersRPInitCond} because it will be useful in the resolution of the Riemann problem for system \eqref{DropModSimpAlphaU} in the next section. 

The solution of \eqref{BurgersSourceTerm} along the characteristics \eqref{Caracteristique} is given by \eqref{sol}. Substituting  \eqref{BurgersRPInitCond} in \eqref{Xfunction}, one obtains
\begin{eqnarray}
\chi(x,t;s)=
\left\lbrace
\begin{aligned}
&x_0 +u_as+\dfrac{u_a-u_-}{\mu}\big(e^{-\mu s}-1\big),\quad x_0<0,\\
&x_0 +u_as+\dfrac{u_a-u_+}{\mu}\big(e^{-\mu s}-1\big),\quad x_0>0.
\end{aligned}
\right.
\label{BurgersCharac}
\end{eqnarray}
\subsection{Shock waves}
We first assume that $u_->u_+$. In this case, condition \eqref{NesSufCondLosReg} is satisfied, and thus characteristics intersect within finite time. Some characteristic curves for different values of $u_a$ are represented in Figure~\ref{ShockFigCar}. 
\begin{figure}[!h]
\centering
\begin{tabular}{ccc}
\includegraphics[scale=0.31]{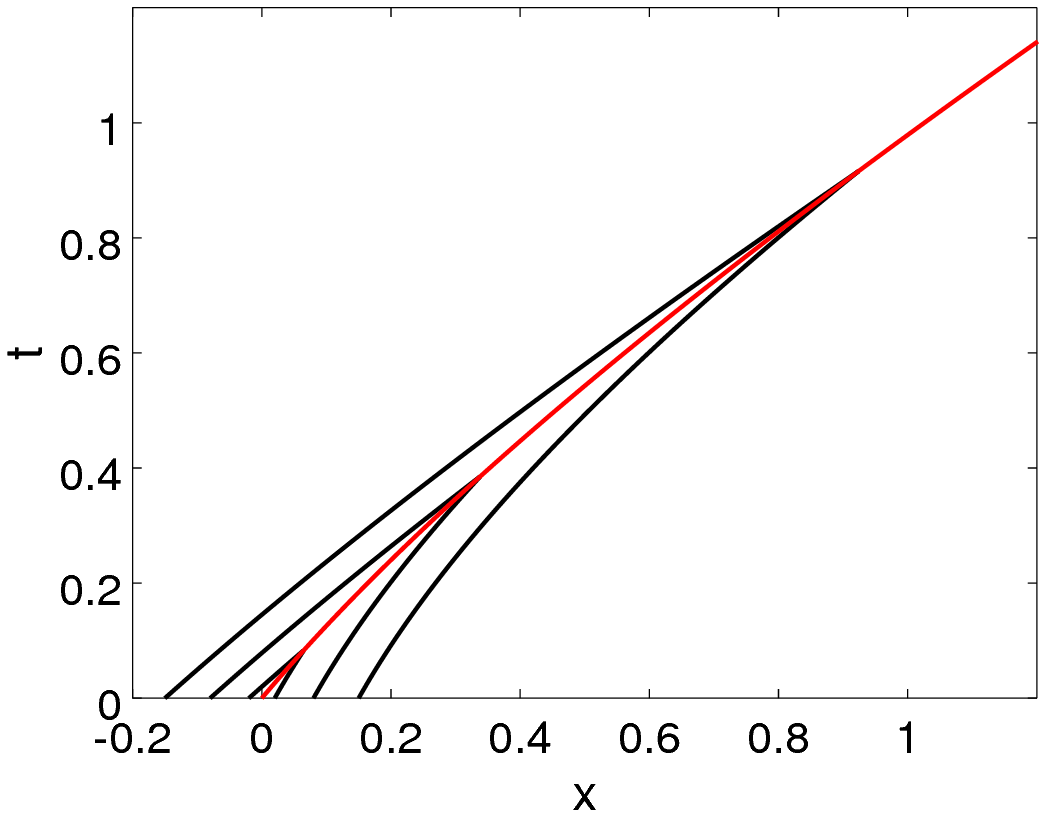} &\includegraphics[scale=0.31]{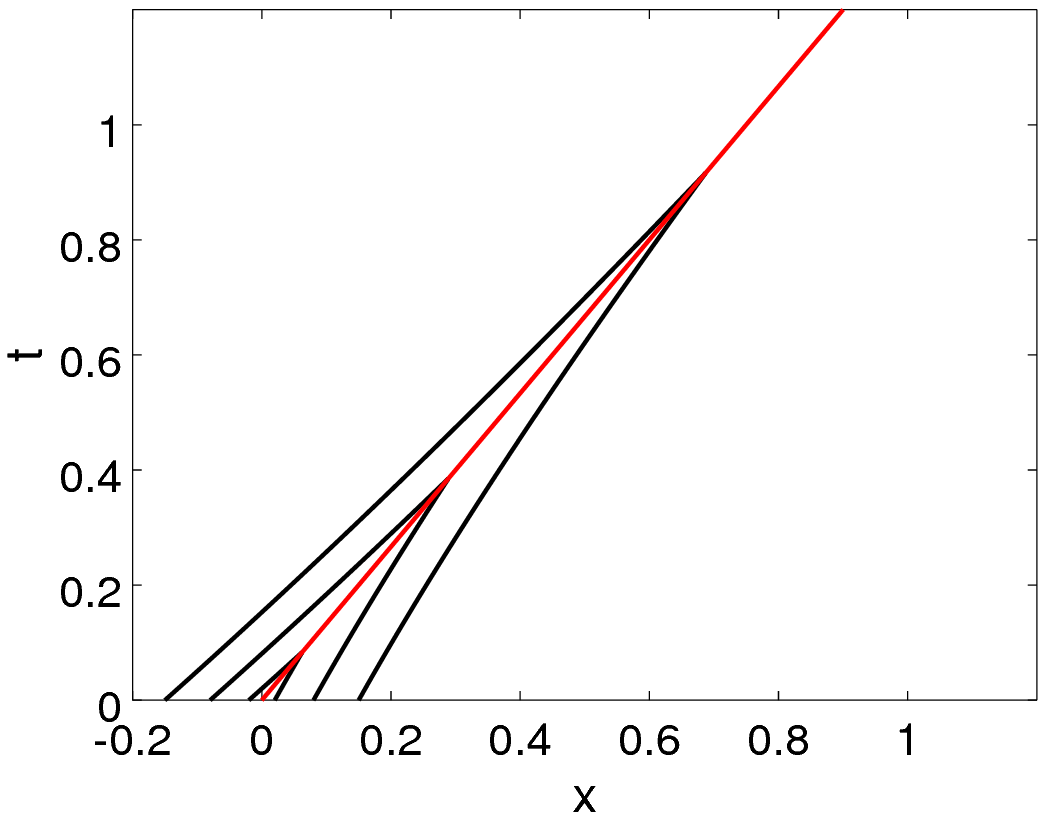}&\includegraphics[scale=0.31]{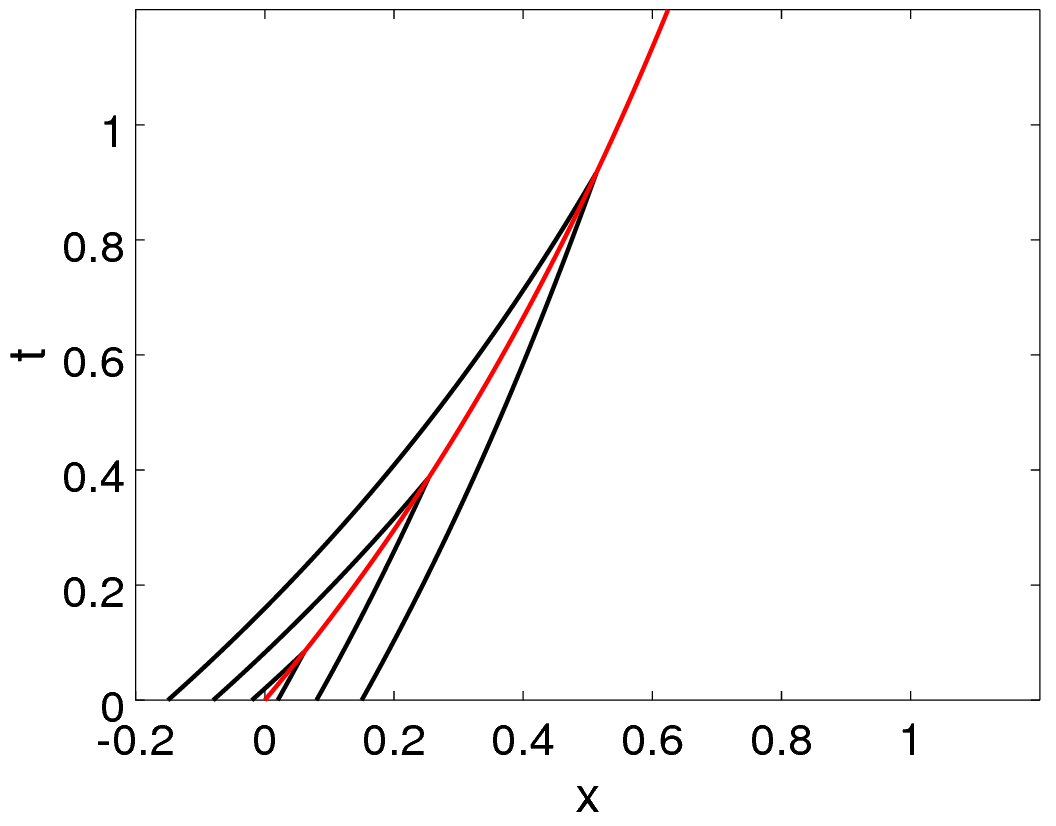}\\
\end{tabular}
\caption{Characteristic curves on the $x$-$t$ plane for $u_{-}=1.0$, $u_{+}=0.5$ and $\mu=1.0$. Left:  $u_a=1.5$, middle: $u_a=0.75$ and right: $u_a=0.2$.}
\label{ShockFigCar}
\end{figure}
The solution is a shock wave, i.e.\ a smooth curve $\Gamma=\lbrace (x,t) :\text{ } x=\xi(t), t\geq0\rbrace$ in the $x$-$t$ plane moving at speed $\sigma(t)=\xi'(t)$ and separating a left and right states denoted by $u_l(x,t)$ and $u_r(x,t)$, respectively. Solutions that may be discontinuous are taken in the weak sense. We have the following definition: 
\begin{defn}
\label{WeakDef}
We say that $u$ is a \textit{weak solution} of \eqref{BurgersSourceTerm} and \eqref{InitCond} if 
\begin{align}
\int_0^{\infty}\int_{-\infty}^{\infty}\big(u\psi_t+\frac{u^2}{2}\psi_x+\mu(u_a-u)\psi\big)dxdt=-\int_{-\infty}^{\infty}u_0(x)\psi(x,0)\,dx,
\label{BurgersWeakSolDef}
\end{align}
for all test functions $\psi$ $\in$ $\mathcal{C}_0^{\infty}(\overline{\mathbb{R}\times\mathbb{R^+}})$. 
\end{defn} 
\noindent
Let $u$ be a regular function  on both sides of the curve $\Gamma$, while being  discontinuous across this curve. We have the following characterization for a weak solution of \eqref{BurgersSourceTerm} and \eqref{InitCond}.
\begin{theo}
\label{BurgersMainTheoStatement}
The function $u$  is a weak solution of \eqref{BurgersSourceTerm} and \eqref{InitCond} if and only if  the following properties hold:
\begin{itemize}
\item[i)] $u$ satisfies \eqref{BurgersSourceTerm} in the classical sense on both sides of the curve $\Gamma$;
\item[ii)] $u(x,0)=u_0(x)$ for all $x\in\mathbb{R}$;
\item[iii)] the following Rankine-Hugoniot conditions are satisfied:
\begin{equation}
\big(u_r(t)-u_l(t)\big)\sigma(t)=\frac{1}{2}\big(u_r(t)^2-u_l(t)^2\big),
\label{BurgersRHC}
\end{equation} 
where $u_l(t)$ and $u_r(t)$ are the limit of the solution $u$ when $(x,t)$ approaches $(\xi(t),t)$ from the left and the right, respectively.
\end{itemize}
\end{theo}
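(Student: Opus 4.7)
The plan is to prove the equivalence by splitting the upper half-plane along $\Gamma$ into the two open regions $\Omega_l=\{(x,t):x<\xi(t),\,t>0\}$ and $\Omega_r=\{(x,t):x>\xi(t),\,t>0\}$, on each of which $u$ is $C^1$, and then applying Green's theorem region by region to the weak formulation \eqref{BurgersWeakSolDef}. The source term $\mu(u_a-u)\psi$ has no derivatives on $\psi$, so it plays no role in the boundary computations; it merely matches the classical form of \eqref{BurgersSourceTerm} on each side and drops out of the jump condition.

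For the direction (i)--(iii) $\Rightarrow$ weak solution, pick $\psi\in\mathcal{C}_0^{\infty}(\overline{\mathbb{R}\times\mathbb{R}^+})$ and write the left-hand side of \eqref{BurgersWeakSolDef} as $I_l+I_r$, where $I_\ast$ is the integral over $\Omega_\ast$. On $\Omega_l$, integration by parts on $u\psi_t+\tfrac12 u^2\psi_x$ produces $-\iint_{\Omega_l}(\partial_tu+\partial_x(\tfrac12 u^2))\psi\,dxdt$ plus boundary contributions on $\partial\Omega_l=\{t=0,\,x<0\}\cup\Gamma$. By (i) the bulk integrand cancels with the source contribution $\iint_{\Omega_l}\mu(u_a-u)\psi\,dxdt$ already present in the weak form. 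The $t=0$ piece of the boundary yields $-\int_{-\infty}^{0}u_-\psi(x,0)\,dx$, and by (ii) together with \eqref{BurgersRPInitCond} this is the left-half of the target right-hand side. The $\Gamma$-boundary contribution, using an outward normal proportional to $(1,-\sigma)$, produces $\int_\Gamma\bigl(\tfrac12 u_l^2-\sigma u_l\bigr)\psi\,dt$. The symmetric computation on $\Omega_r$ gives the right half of the initial-data term plus $\int_\Gamma\bigl(\sigma u_r-\tfrac12 u_r^2\bigr)\psi\,dt$. Adding the two $\Gamma$-terms yields $\int_\Gamma\bigl[\sigma(u_r-u_l)-\tfrac12(u_r^2-u_l^2)\bigr]\psi\,dt$, which vanishes by (iii). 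Summing all contributions gives exactly \eqref{BurgersWeakSolDef}.

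For the converse, assume $u$ is a weak solution with the stated piecewise regularity. Choosing $\psi\in\mathcal{C}_0^\infty(\Omega_l)$ (resp.\ $\Omega_r$), the weak identity reduces to the distributional form of \eqref{BurgersSourceTerm} on an open set where $u$ is $C^1$; by the fundamental lemma of the calculus of variations this yields (i) classically. Next, taking $\psi$ supported in a small strip touching $\{t=0\}$ but disjoint from $\Gamma$, the same integration-by-parts argument as above, now using (i), eliminates the bulk terms and leaves the identity $\int_{-\infty}^{\infty}(u(x,0)-u_0(x))\psi(x,0)\,dx=0$ for arbitrary such test data, giving (ii). Finally, choosing $\psi$ supported in a neighborhood of an arbitrary interior point of $\Gamma$ (away from $t=0$), all bulk and $\{t=0\}$ contributions vanish by (i) and the support condition, leaving $\int_\Gamma\bigl[\sigma(u_r-u_l)-\tfrac12(u_r^2-u_l^2)\bigr]\psi\,dt=0$ for every such $\psi$; varying $\psi$ along $\Gamma$ forces the integrand to be zero pointwise, which is (iii).

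The only delicate point is the bookkeeping of signs in the line integral along $\Gamma$, where the opposite orientations of $\partial\Omega_l$ and $\partial\Omega_r$ on $\Gamma$ combine the one-sided traces $u_l$ and $u_r$ into the correct jump $[\![u]\!]=u_r-u_l$ and $[\![u^2]\!]=u_r^2-u_l^2$; once this is handled, each of (i), (ii), (iii) is isolated by a tailored choice of test function support, and no further analytic difficulty arises beyond standard distributional calculus.
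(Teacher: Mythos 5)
Your proof is correct and is exactly the argument the paper has in mind: the paper gives no details, saying only that one proceeds as for the classical Burgers equation with the zeroth-order source term disappearing from the Rankine--Hugoniot conditions, which is precisely what your region-by-region Green's theorem computation establishes (the source term carries no derivative of $\psi$, so it contributes only to the bulk terms). One cosmetic slip: the theorem is stated for a general piecewise-smooth $u_0$, so the $t=0$ boundary contribution from $\Omega_l$ should be written as $-\int_{-\infty}^{\xi(0)}u(x,0)\psi(x,0)\,dx$ rather than specialized to the Riemann data $u_-$ on $x<0$; the argument is otherwise unchanged.
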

\begin{proof}
The  proof  is  performed  as  for  the  classical  Burgers  equation (see \cite{Raviart,Serre})  but with  an  appropriate  treatment  of  the  source  term  which  disappears  in  the  Rankine-Hugoniot conditions.
\end{proof}
\begin{defn}
A discontinuity propagating with speed $\sigma$ given by \eqref{BurgersRHC} satisfies the entropy condition if 
\begin{equation}
u_r(t)<\sigma(t)<u_l(t).
\label{BurgersSourceTermEntropCond}
\end{equation}
\end{defn}
\noindent
Inequality  \eqref{BurgersSourceTermEntropCond} is known as \textit{Lax's entropy condition} \cite{Lax,Serre,Smoller}. It  means that all characteristics on both sides of the discontinuity are in-coming. This additional condition ensures the uniqueness of the Riemann solution  to the Burgers equation  without source term \cite{Evans2010}. The Lax's entropy condition is also used in \cite{FANG2012307,ZhangShen} for the Burgers equation with source term.

Returning to the Riemann problem for \eqref{BurgersSourceTerm}, Theorem \ref{BurgersMainTheoStatement} states that the solution $u$ satisfies \eqref{BurgersSourceTerm} in the classical sense on both sides of the curve $\Gamma$. The left and right states are determined from \eqref{sol}, that is
\begin{equation}
u_l(x,t)=u_a+(u_{-}-u_a)e^{-\mu t},\qquad u_r(x,t)=u_a+(u_{+}-u_a)e^{-\mu t}.
\label{UleftUright}
\end{equation}
These states are independent of $x$ away from the discontinuity, hence the limit states  $u_l(t)=u_l(x,t)$ and $u_r(t)=u_r(x,t)$. The shock speed of the shock wave 
\begin{equation}
\sigma(t)=\dfrac{1}{2}\big(u_l(t)+u_r(t)\big)=u_a+\left(\dfrac{u_-+u_+}{2}-u_a\right)e^{-\mu t},
\label{ShockSpeed}
\end{equation}
from \eqref{BurgersRHC} satisfies the entropy condition \eqref{BurgersSourceTermEntropCond}. The trajectory of the shock is given by
\begin{equation}
\begin{aligned}
\xi(t)&=\int_0^t\sigma(s)ds=u_at+\left(\dfrac{u_-+u_+-2u_a}{2\mu}\right)\big(1-e^{-\mu t}\big).
\end{aligned}
\label{DisconCurve}
\end{equation}
We reach the following result:
\begin{cor}
\label{BurgersTheoShockSol}
If $u_->u_+$ then the solution of the Riemann problem \eqref{BurgersSourceTerm} and \eqref{BurgersRPInitCond} is given by
\begin{eqnarray}
u(x,t)=
\left\lbrace
\begin{aligned}
&u_l(x,t),\qquad x<\xi(t),\\
&\sigma(t),\qquad\quad x=\xi(t),\\
&u_r(x,t),\quad\text{  } x>\xi(t),
\end{aligned}
\right.
\label{BurgersSourceTermShockSol}
\end{eqnarray}
where $u_l$, $u_r$ are given in \eqref{UleftUright}, $\sigma$ and $\xi$  are given in \eqref{ShockSpeed} and \eqref{DisconCurve}, respectively.
\end{cor}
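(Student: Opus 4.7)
The plan is to apply the characterization of weak solutions provided by Theorem \ref{BurgersMainTheoStatement}, combined with the entropy condition \eqref{BurgersSourceTermEntropCond}. First I would verify that the candidate solution \eqref{BurgersSourceTermShockSol} satisfies the three conditions i), ii), iii) of that theorem, and then check Lax's inequality to confirm admissibility of the discontinuity.

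For condition i), the functions $u_l$ and $u_r$ defined in \eqref{UleftUright} are independent of $x$, so on each side of $\Gamma = \{x = \xi(t)\}$ the PDE \eqref{BurgersSourceTerm} collapses to the ordinary differential equation $\partial_t u = \mu(u_a - u)$. A direct substitution shows that both exponentials solve this ODE with initial values $u_-$ and $u_+$, respectively. Since $\xi(0) = 0$, the limits as $t \downarrow 0$ of $u_l$ and $u_r$ on the half-lines $x < 0$ and $x > 0$ recover $u_0$, yielding condition ii) as well.

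The central step is the Rankine-Hugoniot condition \eqref{BurgersRHC} along $\Gamma$. Because $u_l(t) - u_r(t) = (u_- - u_+) e^{-\mu t} > 0$ for all $t \geq 0$, the jump relation $(u_r - u_l)\sigma = \tfrac{1}{2}(u_r^2 - u_l^2)$ factors and reduces to $\sigma(t) = \tfrac{1}{2}(u_l(t) + u_r(t))$, which agrees with \eqref{ShockSpeed}. The shock trajectory \eqref{DisconCurve} is then recovered as the antiderivative of $\sigma$ with $\xi(0) = 0$. The entropy inequality $u_r(t) < \sigma(t) < u_l(t)$ is immediate from $\sigma$ being the arithmetic mean of $u_l$ and $u_r$ together with the strict inequality $u_l > u_r$ already noted.

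The step most worth pausing over is not any single computation but rather the justification that gluing $u_l$ and $u_r$ along the curve $\xi$ does produce an admissible weak solution, with no additional intermediate wave required. This is precisely what Theorem \ref{BurgersMainTheoStatement} is designed to certify, so once its hypotheses are checked the corollary follows. As a geometric sanity check one can compare $\xi$ with the characteristic formulas \eqref{BurgersCharac}: the curve $\xi(t)$ sits at the midpoint between the limiting left characteristic issuing from $x_0 \to 0^-$ and the limiting right characteristic from $x_0 \to 0^+$, so every characteristic not absorbed by the shock lands on the correct side of $\Gamma$, confirming the consistency of the construction.
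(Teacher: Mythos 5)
Your proposal is correct and follows essentially the same route as the paper: both obtain the one-sided states from the characteristic formula \eqref{sol}, invoke Theorem \ref{BurgersMainTheoStatement} to reduce the problem to checking the Rankine--Hugoniot relation \eqref{BurgersRHC}, deduce $\sigma=\tfrac12(u_l+u_r)$ and its antiderivative $\xi$, and verify Lax's condition \eqref{BurgersSourceTermEntropCond} from $u_l(t)-u_r(t)=(u_--u_+)e^{-\mu t}>0$. The only addition is your closing geometric check on the characteristics, which the paper does not include but which is consistent with its Figure~\ref{ShockFigCar}.
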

\begin{rem}
\label{Rem1}
 We have:
\begin{equation}
\lim_{t\rightarrow\infty}\big(u_l(t)-\sigma(t)\big)=\lim_{t\rightarrow\infty}\big(\sigma(t)-u_r(t)\big)=0.
\end{equation}  
This means that the Lax's entropy condition for \eqref{BurgersSourceTerm} degenerates as time goes to infinity. This degeneracy is not observed with the classical inviscid Burgers equation ($\mu=0$) because the two limits states are constant.
\end{rem}

\subsection{Rarefaction waves}
\label{BurgersRarSect}
Secondly, we assume that $u_-<u_+$. Condition \eqref{NesSufCondLosReg} is not satisfied, and thus characteristics do not intersect, but do not cover the whole $x$-$t$ plane. Some characteristic curves for different values of $u_a$ are represented in Figure~\ref{RarFigCar}.
\begin{figure}[!h]
\centering
\begin{tabular}{ccc}
\includegraphics[scale=0.3]{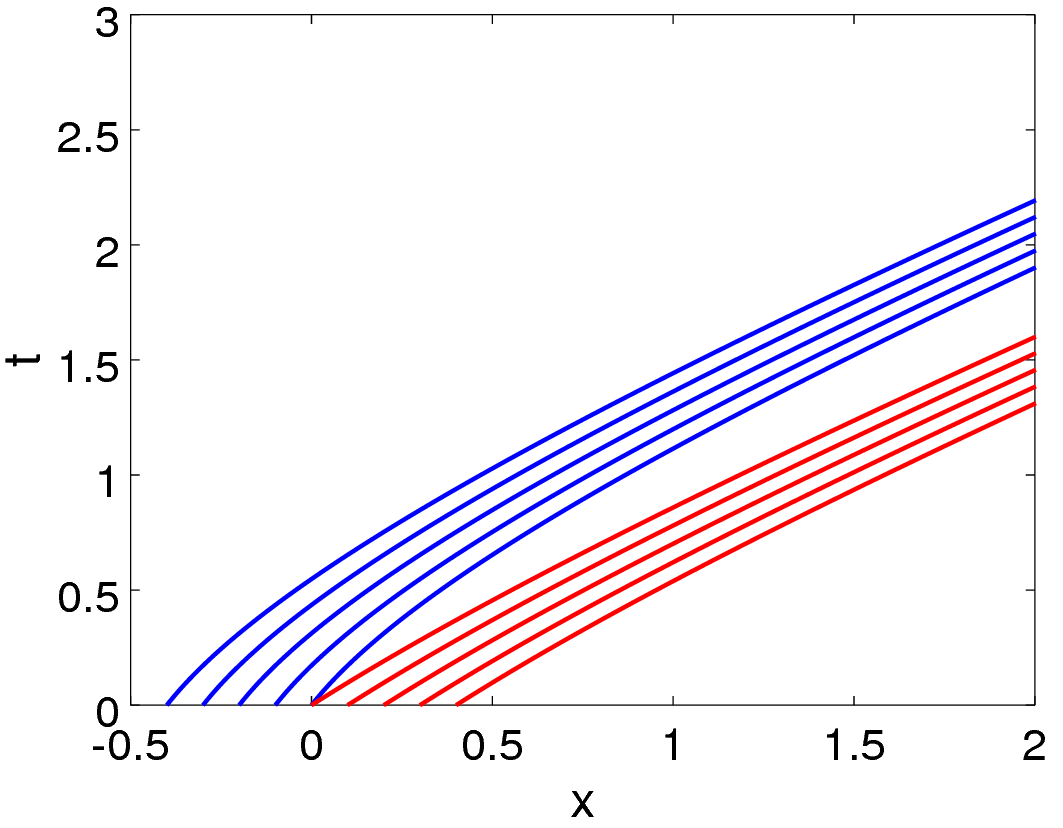} &\includegraphics[scale=0.293]{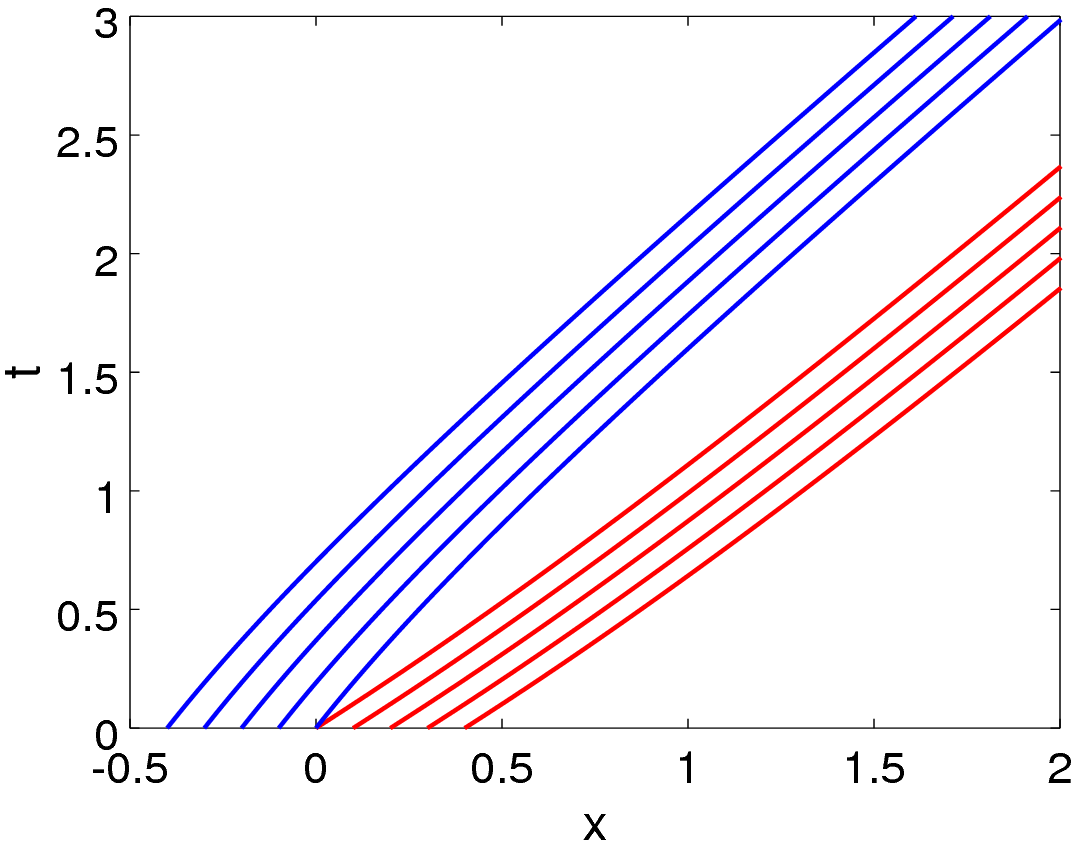}&\includegraphics[scale=0.3]{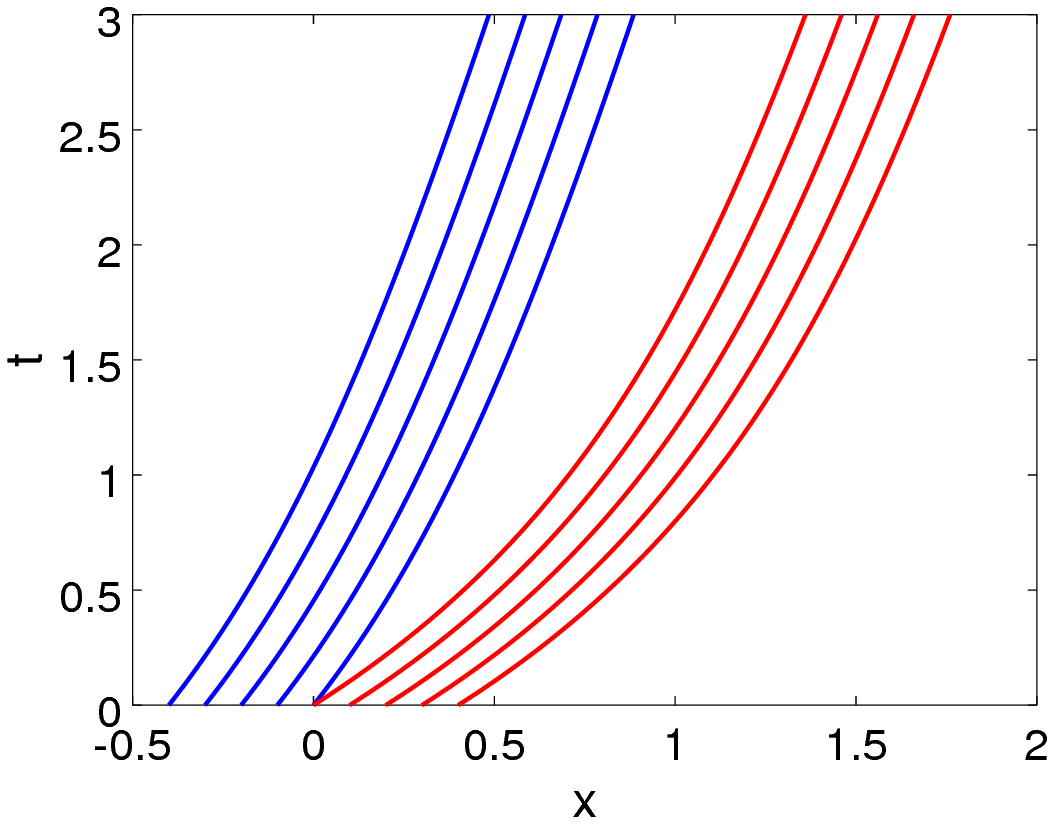}\\
\end{tabular}
\caption{Characteristic curves on the $x$-$t$ plane for $u_-=0.5$, $u_+=1.0$ and $\mu=1.0$. Left: $u_a=1.5$, middle: $u_a=0.75$ and right: $u_a=0.2$.}
\label{RarFigCar}
\end{figure}
The uncovered region $\mathcal{S}$ is delimited by the curves
\begin{equation} 
X_1(t)=\int_0^tu_l(s)ds=u_at+\dfrac{(u_a-u_-)(e^{-\mu t}-1)}{\mu}
\label{X1}
\end{equation}
and
\begin{equation}
X_2(t)=\int_0^tu_r(s)ds=u_at+\dfrac{(u_a-u_+)(e^{-\mu t}-1)}{\mu}.
\label{X2}
\end{equation}
By the method of characteristics, the solution of the Riemann problem is a rarefaction wave, i.e.\ a continuous function satisfying \eqref{BurgersSourceTerm}. This solution is given by $u_l(x,t)$ for $x<X_1(t)$ and $u_r(x,t)$ for $x>X_2(t)$. To find the solution inside $\mathcal{S}$,  we have to fill this region by a family of characteristics starting at the origin, i.e.\ to solve \eqref{Caracteristique} with the initial condition $\chi(x,t;0)=0$. Figure~\ref{FilledSurfaceS} shows the region $\mathcal{S}$ (delimited by $x=X_1(t)$ (in blue) and $x=X_2(t)$ (in red)) filled with a family of characteristics (in black) starting at the origin. 
\begin{figure}[!h]
\centering
\begin{tabular}{c}
\includegraphics[scale=0.3]{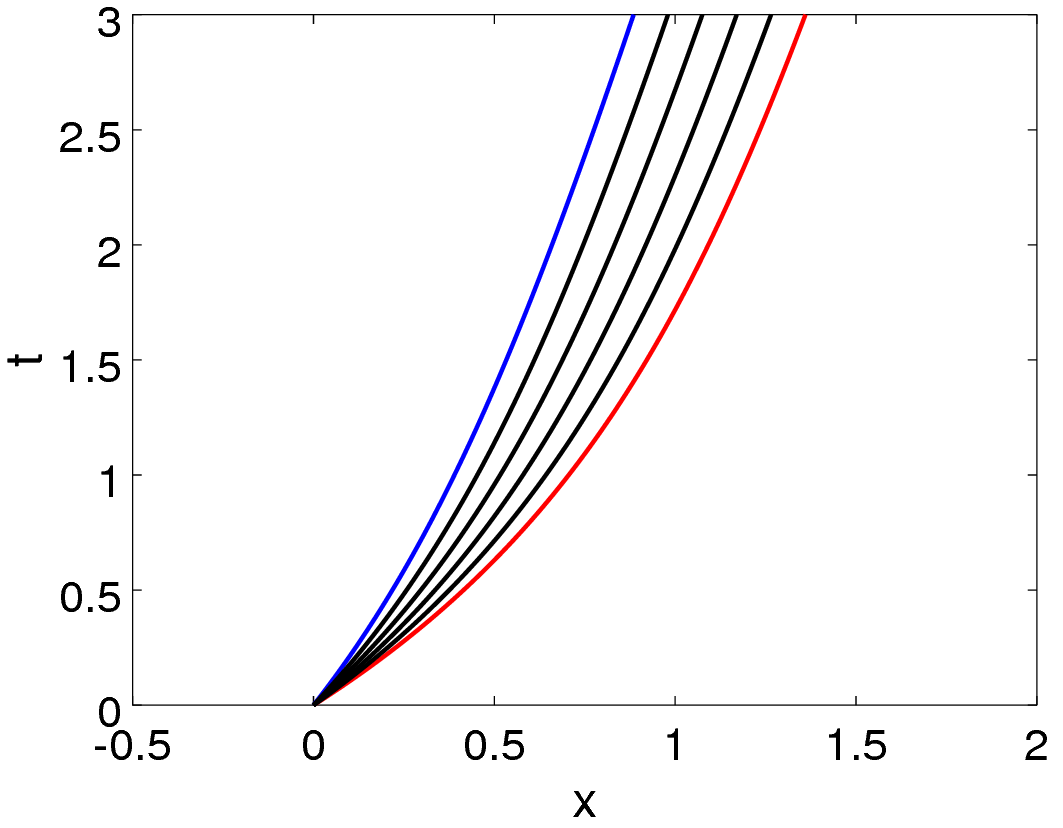}
\end{tabular}
\caption{Region $\mathcal{S}$ filled with characteristics starting at the origin. $u_-=0.5$, $u_+=1.0$, $\mu=1.0$ and $u_a=0.2$.}
\label{FilledSurfaceS}
\end{figure}
From  \eqref{sol}, we  calculate the solution $u$ at the foot of the characteristics. We obtain
\begin{equation}
u(\chi(x,t;0),0)=u(x_0,0)=u_a+\big(u(x,t)-u_a\big)e^{\mu t}.
\label{SolInit}
\end{equation} 
Substituting  \eqref{SolInit} in  \eqref{Xfunction}, one gets
\begin{eqnarray}
\begin{aligned}
\chi(x,t;s)=u_as-\dfrac{\big(u(x,t)-u_a\big)\big(e^{\mu(t-s)}-e^{\mu t}\big)}{\mu}.
\end{aligned}
\end{eqnarray}
From this last equation, we get  for $s=t$ that
\begin{equation}
u(x,t)=\overline{u}(x,t)=u_a+\dfrac{\mu\big(x-u_at\big)}{e^{\mu t}-1}.
\label{RarSolInter}
\end{equation}
The following result holds: 
\begin{cor}
\label{BurgersRarSol}
If $u_-<u_+$ then the solution of the Riemann problem \eqref{BurgersSourceTerm} and \eqref{BurgersRPInitCond} is given by
\begin{equation}
u(x,t)=
\left\lbrace
\begin{aligned}
&u_l(x,t),\qquad x<X_1(t),\\
&\overline{u}(x,t),\qquad X_1(t)\leq x \leq X_2(t),\\
&u_r(x,t),\qquad x>X_2(t),
\end{aligned}
\right.
\label{BurgersSourceTermRarSol}
\end{equation}
where $u_l$, $u_r$ are given in \eqref{UleftUright}, $X_1$, $X_2$ are given in \eqref{X1} and \eqref{X2}, respectively, and $\overline{u}$ is given in \eqref{RarSolInter}.
\end{cor}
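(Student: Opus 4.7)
The plan is to verify the three ingredients that together guarantee \eqref{BurgersSourceTermRarSol} is a weak solution in the sense of Definition \ref{WeakDef}: the PDE is satisfied classically on each of the three open regions $\{x<X_1(t)\}$, $\mathcal{S}=\{X_1(t)<x<X_2(t)\}$, $\{x>X_2(t)\}$; the candidate is continuous across the interfaces $x=X_1(t)$ and $x=X_2(t)$ (so that no Rankine-Hugoniot jump terms arise in the weak formulation); and the initial data \eqref{BurgersRPInitCond} is recovered as $t\to 0^+$.

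In the outer regions the verification is immediate: $u_l$ and $u_r$ were constructed from \eqref{sol} along the characteristics issued from $x_0<0$ and $x_0>0$, respectively, so they satisfy \eqref{BurgersSourceTerm} classically. For the rarefaction fan, I would plug $\overline{u}(x,t)=u_a+\mu(x-u_a t)/(e^{\mu t}-1)$ into \eqref{BurgersSourceTerm} and compute directly:
\begin{equation*}
\partial_x\overline{u}=\dfrac{\mu}{e^{\mu t}-1},\qquad \partial_t\overline{u}=\dfrac{-\mu u_a(e^{\mu t}-1)-\mu^2(x-u_at)e^{\mu t}}{(e^{\mu t}-1)^2},
\end{equation*}
whence $\partial_t\overline{u}+\overline{u}\,\partial_x\overline{u}$ collapses, after using $1-e^{\mu t}=-(e^{\mu t}-1)$, to $-\mu^2(x-u_at)/(e^{\mu t}-1)=\mu(u_a-\overline{u})$, as required.

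Continuity across the interfaces reduces to a short algebraic check based on the identity
\begin{equation*}
\dfrac{e^{-\mu t}-1}{e^{\mu t}-1}=-e^{-\mu t},
\end{equation*}
which, applied at $x=X_1(t)$ where $\mu(x-u_at)=(u_a-u_-)(e^{-\mu t}-1)$, yields $\overline{u}(X_1(t),t)=u_a+(u_--u_a)e^{-\mu t}=u_l(t)$; the same computation at $x=X_2(t)$ gives $\overline{u}(X_2(t),t)=u_r(t)$. For the initial trace, since $X_1(t),X_2(t)\to 0$ as $t\to 0^+$, for any fixed $x>0$ (resp.\ $x<0$) one has $x>X_2(t)$ (resp.\ $x<X_1(t)$) for $t$ small, so $u(x,t)=u_r(x,t)\to u_+$ (resp.\ $u_l(x,t)\to u_-$), recovering \eqref{BurgersRPInitCond}.

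To finish, I would combine these three pieces into a weak identity: integrating \eqref{BurgersWeakSolDef} by parts over each of the three regions, the boundary terms along $x=X_1(t)$ and $x=X_2(t)$ cancel because $u$ is continuous there, the interior terms vanish from the classical PDE, and the initial trace is $u_0$. The main technical point to watch is the apparent singularity of $\overline{u}$ at $t=0$; this is harmless because the rarefaction region $\mathcal{S}$ shrinks to the single point $x=0$ as $t\to 0^+$, so that $\overline{u}$ is never evaluated at $t=0$ for $x\neq 0$, and $(x-u_at)/(e^{\mu t}-1)\to (x-u_at)/(\mu t)$ stays bounded on any set $X_1(t)\le x\le X_2(t)$ by construction. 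No entropy condition is needed since the constructed solution is continuous.
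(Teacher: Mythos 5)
Your proposal is correct and follows essentially the same route as the paper: verify that the candidate satisfies \eqref{BurgersSourceTerm} classically in each of the three regions and that it is continuous across $x=X_1(t)$ and $x=X_2(t)$ (by substituting $X_1$, $X_2$ into \eqref{RarSolInter} and recovering $u_l$, $u_r$). You simply carry out explicitly the computations the paper leaves implicit, and add the (correct) remarks on the initial trace and the weak formulation.
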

\begin{proof}
The function $u$ satisfies \eqref{BurgersSourceTerm} inside and outside $\mathcal{S}$, and is continuous at points $X_1(t)$ and $X_2(t)$ for all $t>0$. In fact, by replacing $X_1(t)$ (resp.\ $X_2(t)$) in  \eqref{RarSolInter}, one gets $u_l$ (resp.\  $u_r$).
\end{proof}

The Burgers equation with source term develops discontinuities if and only if the slope of the initial condition is sufficiently negative with respect to the coefficient $\mu$. This is an extension of the condition for loss of regularity for the classical Burgers equation (for $\mu=0$). The characteristic curves associated to the inviscid Burger equation with source term are no longer straight lines but are curves that tend asymptotically to straight lines as time grows, as opposed to the classical Burgers equation. The solution of the Riemann problem is either a shock or a rarefaction wave as in the homogeneous case. However, the left and right states are no longer constant, while asymptotically approaching $u_a$ which behaves as an equilibrium point as time goes to infinity. The zeroth order linear source term acts as a relaxation force preventing shocks from occurring and the Lax's entropy condition degenerates as time goes to infinity.

\section{Riemann problem for system \eqref{DropModSimpAlphaU}}
\label{SectRPDropModAlphaU}
In this section we study system \eqref{DropModSimpAlphaU} satisfying the initial conditions
\begin{equation}
\big(\alpha,u\big)(x,0)=\big(\alpha_0,u_0\big)(x),
\label{DropModInAlphaUInitCond}
\end{equation}
where $\alpha_0$ and $u_0$ are piecewise smooth functions. For $\mu=0$, system \eqref{DropModSimpAlphaU} is used to model the evolution of density inhomogeneities in matter in the universe (see \cite{Shandarin}, section II.B.3). Although viscosity is mentioned in this reference, it is set to zero before solving.  For a complete solution of system \eqref{DropModSimpAlphaU} in the homogeneous case $\mu=0$, we refer the reader to \cite{weinan1996}. Here, we solve the Riemann problem for \eqref{DropModSimpAlphaU} with the  initial condition
\begin{equation}
\big(\alpha_0,u_0\big)(x)=
\left\lbrace
\begin{aligned}
(\alpha_-,u_-),\quad x<0,\\
(\alpha_+,u_+),\quad x>0,
\end{aligned}
\right.
\qquad \alpha_-, \alpha_+  \in \mathbb{R}^+ \text{ and } u_-, u_+ \in \mathbb{R}.
\label{DropModInAlphaURPInitCond}
\end{equation}
The solution of \eqref{DropModSimpAlphaU} and \eqref{DropModInAlphaURPInitCond} will be useful in the resolution of the Riemann problem for  \eqref{DropModSimp} in the next section.
\subsection{Delta-shock waves}
Assume $u_{-}>u_{+}$. The characteristics overlap. As pointed out in section \ref{PerteReg}, the solution is not bounded, more precisely, $\alpha$ blows up and $u$ is discontinuous. This leads to the fundamental question of defining products of non-smooth solutions.  A suitable notion of weak solutions for nonconservative systems involving product of non-smooth functions was proposed by Dal Maso, LeFloch, and Murat \cite{DalMaso} and the nonlinear stability of such solutions was investigated therein. We are interested here in the conservative form and solutions are sought in the sense of distributions. Motivated by  \cite{HCheng,CHENG201117,Sheng,TAN19941,YANG1999447}, we seek solutions with $\delta$-distribution at the jump, i.e.\ we look for a solution in the form 
\begin{equation}
\alpha(x,t)=\alpha^0(x,t)+\omega(t)\delta(x-\xi(t)),\quad u(x,t)=u^0(x,t),
\label{ShockFunctionsAlphaU}
\end{equation}
where  $\alpha^0$, $u^0$ are smooth functions on both sides of the curve 
\begin{equation}
\Gamma=\lbrace(x,t):x=\xi(t), t\geq0\rbrace,
\end{equation} 
while being discontinuous across this curve, $\delta=\delta(x)$ is the Dirac mass centered at the origin and $\omega$ is a smooth function defined on $\mathbb{R}_0^+$ and satisfying the initial condition
\begin{equation}
\omega(0)=\omega_0 \in \mathbb{R}_0^+.
\label{weightedFunctionInitCond}
\end{equation}
We define a weighted $\delta$-function $\omega(t)\delta_{\xi}$ supported on the curve $\Gamma$ as 
\begin{equation}
\langle\omega(t)\delta_{\xi},\psi\rangle=\int_0^{\infty}\omega(t)\psi(\xi(t),t)dt,
\label{WeightedDeltaFunction}
\end{equation}
for all test function $\psi$ $\in$ $\mathcal{C}_0^{\infty}(\mathbb{R}\times\mathbb{R^+})$. We also define the  duality products between the functions $\alpha$ and $u$ (seen as distributions) and test functions in $\mathcal{C}_0^{\infty}(\mathbb{R}\times\mathbb{R^+})$ as 
\begin{align}
\langle \alpha,\psi\rangle&=\int_0^{\infty}\int_{-\infty}^{\infty}\alpha^0\psi\,dxdt +\langle \omega(t)\delta_{\xi},\psi\rangle,\\
\langle \alpha u,\psi\rangle&= \int_0^{\infty}\int_{-\infty}^{\infty}\alpha^0u^0\psi\,dxdt +\langle \omega(t)\xi'(t)\delta_{\xi},\psi\rangle,
\label{DefDisSolModEE}
\end{align}
and we introduce the following definition: 
\begin{defn}
\label{Def}
We say that a pair of distributions $(\alpha,u)$ as given in \eqref{ShockFunctionsAlphaU}-\eqref{WeightedDeltaFunction} is a \textbf{weak solution} of \eqref{DropModSimpAlphaU} and \eqref{DropModInAlphaUInitCond} if 
\begin{align}
\label{WeakSolModEE}
&\langle \alpha, \psi_t \rangle + \langle \alpha u, \psi_x \rangle =-\int_{-\infty}^{\infty}\alpha_0(x)\psi(x,0)\,dx-\omega_0\psi(\xi(0),0),\\
&\int_0^{\infty}\int_{-\infty}^{\infty}\Big(u\psi_t+\frac{u^2}{2}\psi_x+\mu(u_a-u)\psi\Big)dxdt=-\int_{-\infty}^{\infty}u_0(x)\psi(x,0)\,dx,
\label{WeakSolModEEE}
\end{align}
hold for all test functions $\psi \in \mathcal{C}_0^{\infty}(\overline{\mathbb{R}\times\mathbb{R^+}})$.
\end{defn}
\noindent
We have the following result:
\begin{theo}
\label{DropModSimpAlphaUMainTheo}
A pair of distributions $(\alpha,u)$ as given in \eqref{ShockFunctionsAlphaU}-\eqref{WeightedDeltaFunction} is a weak solution of \eqref{DropModSimpAlphaU} and \eqref{DropModInAlphaUInitCond} if and only if the following properties are satisfied:
\begin{itemize}
\item[i)]$(\alpha^0,u^0)$ satisfies \eqref{DropModSimpAlphaU} in the classical sense on both sides of the  curve $\Gamma$;
\item[ii)] $\alpha^0(x,0)=\alpha_0(x)$ and $u^0(x,0)=u_0(x)$ for all $x\in \mathbb{R}$;
\item[iii)] the following system of differential-algebraic equations (DAE) is satisfied on $\Gamma$:
\begin{equation}
\left\lbrace
\begin{aligned}
&\dfrac{d\omega}{dt}(t)=
\big(\alpha_r(t)-\alpha_l(t)\big)\sigma(t)-\big(\alpha_r(t)u_r(t)-\alpha_l(t)u_l(t)\big),\\
&\big(u_r(t)-u_l(t)\big)\sigma(t)=\dfrac{1}{2}\big(u_r(t)^2-u_l(t)^2\big),\\
\end{aligned}
\right.
\label{GenRHCDropModSimpAlphaU}
\end{equation}
where $(\alpha_l(t),u_l(t))$ and $(\alpha_r(t),u_r(t))$ are the limit of the solution $(\alpha,u)$ when $(x,t)$ approaches $(\xi(t),t)$ from the left and the right, respectively;
\item[iv)] the following initial condition is satisfied:
\begin{equation}
\omega(0)=\omega_0.
\label{WeiFuncInitCond}
\end{equation}
\end{itemize}
\end{theo}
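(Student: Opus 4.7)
The plan is to establish the equivalence by inserting the ansatz \eqref{ShockFunctionsAlphaU}--\eqref{WeightedDeltaFunction} into the weak formulation of Definition \ref{Def} and splitting the upper half-plane into the two subregions $\Omega_{-}=\{(x,t):x<\xi(t)\}$ and $\Omega_{+}=\{(x,t):x>\xi(t)\}$ on which $(\alpha^0,u^0)$ is smooth. The second identity \eqref{WeakSolModEEE} involves no singular measure, so it coincides verbatim with the weak formulation treated in Theorem \ref{BurgersMainTheoStatement}; applying that theorem yields at once that $u^0$ satisfies \eqref{BurgersSourceTerm} classically on each $\Omega_{\pm}$, the initial datum $u^0(x,0)=u_0(x)$, and the Rankine--Hugoniot relation $(u_r-u_l)\sigma=\tfrac{1}{2}(u_r^2-u_l^2)$ with $\sigma(t)=\xi'(t)$. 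This is precisely the second line of \eqref{GenRHCDropModSimpAlphaU} and delivers the $u$-parts of (i) and (ii).

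The core of the argument is the first identity \eqref{WeakSolModEE}, which I would treat in two pieces. For the regular part I would split
\begin{equation*}
\int_0^{\infty}\!\!\int_{-\infty}^{\infty}\bigl(\alpha^0\psi_t+\alpha^0u^0\psi_x\bigr)dx\,dt=\iint_{\Omega_{-}}+\iint_{\Omega_{+}},
\end{equation*}
and apply Leibniz's rule together with integration by parts on each piece. The $t=0$ boundary pieces reassemble into $-\int_{-\infty}^{\infty}\alpha^0(x,0)\psi(x,0)\,dx$, the interior terms yield $-\iint(\partial_t\alpha^0+\partial_x(\alpha^0u^0))\psi\,dx\,dt$, and the matched boundary integrals along $\Gamma$ produce
\begin{equation*}
\int_0^{\infty}\!\bigl[(\alpha_r-\alpha_l)\sigma(t)-(\alpha_r u_r-\alpha_l u_l)\bigr]\psi(\xi(t),t)\,dt.
\end{equation*}
For the singular part, using \eqref{WeightedDeltaFunction} and the chain rule $\frac{d}{dt}\psi(\xi(t),t)=\psi_t+\sigma(t)\psi_x$ along $\Gamma$, followed by one integration by parts in $t$, I would obtain
\begin{equation*}
\langle\omega\delta_{\xi},\psi_t\rangle+\langle\omega\sigma\delta_{\xi},\psi_x\rangle=-\omega(0)\psi(\xi(0),0)-\int_0^{\infty}\omega'(t)\psi(\xi(t),t)\,dt.
\end{equation*}

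Summing the two contributions and comparing with the right-hand side of \eqref{WeakSolModEE}, the weak identity collapses to
\begin{align*}
&\iint\bigl(\partial_t\alpha^0+\partial_x(\alpha^0u^0)\bigr)\psi\,dx\,dt+\int_0^{\infty}\!\bigl[\omega'(t)-(\alpha_r-\alpha_l)\sigma(t)+(\alpha_r u_r-\alpha_l u_l)\bigr]\psi(\xi(t),t)\,dt\\
&\qquad=\int_{-\infty}^{\infty}\bigl(\alpha^0(x,0)-\alpha_0(x)\bigr)\psi(x,0)\,dx+\bigl(\omega(0)-\omega_0\bigr)\psi(\xi(0),0),
\end{align*}
which must vanish for every $\psi\in\mathcal{C}_0^{\infty}(\overline{\mathbb{R}\times\mathbb{R}^{+}})$. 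Choosing $\psi$ supported strictly inside $\Omega_{\pm}$ isolates the $\alpha$-part of (i); choosing supports near $t=0$ but away from $(\xi(0),0)$ gives $\alpha^0(x,0)=\alpha_0(x)$, completing (ii); localizing near $(\xi(0),0)$ forces $\omega(0)=\omega_0$, that is (iv); and test functions concentrated along $\Gamma$ give the first line of \eqref{GenRHCDropModSimpAlphaU}. The converse implication is immediate by running the same computation in reverse. The one delicate point, and the step most likely to invite a sign error, is keeping the orientation conventions consistent when applying Leibniz's rule on $\Omega_{-}$ and $\Omega_{+}$ so that the $\sigma$-contributions from the two boundary integrals along $\Gamma$ align correctly with the $\omega'$ coming from the singular part; once this bookkeeping is done, the theorem reduces to the divergence theorem and the fundamental lemma of the calculus of variations.
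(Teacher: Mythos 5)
Your proposal is correct and follows essentially the same route as the paper: the $u$-equation is dispatched by invoking Theorem \ref{BurgersMainTheoStatement}, and the $\alpha$-equation is handled by splitting the regular part across $\Gamma$, integrating by parts, converting the singular part via the chain rule $\psi_t+\sigma\psi_x=\frac{d}{dt}\psi(\xi(t),t)$, and then localizing test functions to extract (i)--(iv). The intermediate identity you arrive at is exactly the paper's equation \eqref{DropModSimpAlphaUMainTheoProof}, and your case analysis of test-function supports matches the paper's Cases 1--4.
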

\begin{proof}
By Theorem \ref{BurgersMainTheoStatement}, $u=u^0$ is a weak solution of the second equation of \eqref{DropModSimpAlphaU} satisfying the second equality in the initial conditions \eqref{DropModInAlphaUInitCond} if and only if the properties $(i)$-$(iii)$ are satisfied by $u$.
It remains to prove the properties $(i)$-$(iv)$ for $\alpha$.
Let $\psi \in \mathcal{C}_0^{\infty}(\overline{\mathbb{R}\times\mathbb{R^+}})$. We have: 
\begin{align*}
\langle \alpha, \psi_t \rangle + \langle \alpha u, \psi_x \rangle
&=\int_0^{\infty}\int_{-\infty}^{\infty}\big(\alpha^0\psi_t+\alpha^0 u^0\psi_x\big)\,dxdt +\int_0^{\infty}\big(\omega(t)\psi_t(\xi(t),t)+
\omega(t)\xi'(t)\psi_x(\xi(t),t)\big)\,dt\nonumber\\
&=\int_0^{\infty}\int_{-\infty}^{\infty}(\alpha^0\psi_t + \alpha^0 u^0\psi_x)\,dxdt
+\int_0^{\infty}\omega(t)\frac{d\psi}{dt}(\xi(t),t)\,dt.
\end{align*}
Integrating by part the r.h.s integrals, we obtain 
\begin{align}
&\langle \alpha, \psi_t \rangle + \langle \alpha u, \psi_x \rangle =-\int_0^{\infty}\int_{-\infty}^{\xi(t)}\big(\alpha^0_t+(\alpha^0u^0)_x\big)\psi\,dxdt-\int_0^{\infty}\int_{\xi(t)}^{\infty}\big(\alpha^0_t+(\alpha^0u^0)_x\big)\psi\,dxdt\nonumber\\
&\quad+\int_{0}^{\infty}\Big(\big(\alpha_r(t)-\alpha_l(t)\big)\sigma(t)-\big(\alpha_r(t)u_r(t)-\alpha_l(t)u_l(t)\big)-\frac{d\omega}{dt}(t)\Big)\psi(\xi(t),t)\,dt\nonumber\\
&\qquad-\int_{-\infty}^{\infty}\alpha^0(x,0)\psi(x,0)\,dx-\omega(0)\psi(\xi(0),0).
\label{DropModSimpAlphaUMainTheoProof}
\end{align}
\textbf{1)} Suppose that $(\alpha,u)$ is a weak solution of \eqref{DropModSimpAlphaU} and \eqref{DropModInAlphaUInitCond}. Then \eqref{DropModSimpAlphaUMainTheoProof} reduces to
\begin{align}
&-\int_{-\infty}^{\infty}\alpha_0(x)\psi(x,0)\,dx-\omega_0\psi(\xi(0),0)=-\int_0^{\infty}\int_{-\infty}^{\xi(t)}\big(\alpha^0_t+(\alpha^0u^0)_x\big)\psi\,dxdt\nonumber\\
&\quad-\int_0^{\infty}\int_{\xi(t)}^{\infty}\big(\alpha^0_t+(\alpha^0u^0)_x\big)\psi\,dxdt-\int_{-\infty}^{\infty}\alpha^0(x,0)\psi(x,0)\,dx-\omega(0)\psi(\xi(0),0)\nonumber\\
&\quad+\int_{x=\xi(t)}\Big(\big(\alpha_r(t)-\alpha_l(t)\big)\sigma(t)-\big(\alpha_r(t)u_r(t)-\alpha_l(t)u_l(t)\big)-\frac{d\omega}{dt}(t)\Big)\psi(\xi(t),t)\,dt.
\label{FirstImpl}
\end{align}
\textit{\underline{Case 1}}: Taking $\psi$ $\in$ $\mathcal{C}_0^{\infty}(\mathbb{R}\times\mathbb{R^+})$ satisfying $\psi(\xi(t),t)=0$ for all $t\geq0$ in \eqref{FirstImpl}, we get $(i)$ .\\
\textit{\underline{Case 2}}: Taking $\psi$ $\in$ $\mathcal{C}_0^{\infty}(\overline{\mathbb{R}\times\mathbb{R^+}})$ satisfying $\psi(\xi(t),t)=0$ for all $t\geq0$ and $\psi(x,0)\neq0$ in \eqref{FirstImpl} and using $(i)$, we obtain $(ii)$.\\
\textit{\underline{Case 3}}: Taking $\psi$ $\in$ $\mathcal{C}_0^{\infty}(\mathbb{R}\times\mathbb{R^+})$ in \eqref{FirstImpl} and using $(i)$, one gets $(iii)$.\\
\textit{\underline{Case 4}}: Taking $\psi$ $\in$ $\mathcal{C}_0^{\infty}(\overline{\mathbb{R}\times\mathbb{R^+}})$ in  \eqref{FirstImpl} and using $(i)$, $(ii)$ and $(iii)$ then we obtain $(iv)$.\\ 
\textbf{2)} Conversely, if $(i)$, $(ii)$, $(iii)$ and $(iv)$ are satisfied then \eqref{DropModSimpAlphaUMainTheoProof} reduces to \eqref{WeakSolModEE}.
\end{proof}
\begin{rem}
The DAE \eqref{GenRHCDropModSimpAlphaU}-\eqref{WeiFuncInitCond} reflect the exact relationship between the limit states on the two sides, the weight and propagation speed of the discontinuity, as the classical Rankine-Hugoniot conditions do for ordinary shocks. It is called the \textbf{generalized Rankine-Hugoniot (GRH) conditions}. The equations \eqref{GenRHCDropModSimpAlphaU}  state that the defect of mass conservation induced by the discontinuous  velocity at the shock leads to a mass accumulation along the trajectory of the shock.
\end{rem}

\begin{defn}
We call a \textbf{$\delta$-shock solution} or \textbf{delta-shock wave} of system\eqref{DropModSimpAlphaU} and \eqref{DropModInAlphaUInitCond}, a weak solution of \eqref{DropModSimpAlphaU} and \eqref{DropModInAlphaUInitCond} satisfying the entropy condition \eqref{BurgersSourceTermEntropCond}.
\end{defn}

The above definition of delta-shock waves was used by Sheng and Zhang \cite{Sheng} to construct the solution to the Riemann problem for the zero-pressure gas dynamics system. The classical Rankine-Hugoniot conditions for ordinary shock waves have been generalized to those of delta-shock waves to describe the relationship among  the limit states, propagation speed, location and weight of the discontinuity \cite{Sheng}.    Delta-shock waves can be interpreted as particular measure solutions as defined in \cite{Cheng1997}, and belong to the space of signed Borel measures on $\mathbb{R}$, denoted  $\mathcal{M}(\mathbb{R})$. A measure-theoretic justification of delta-shock waves can also be found in \cite{Bouchut2,weinan1996,li2003,TAN19941}. Since the volume fraction $\alpha$ may develop a Dirac $\delta$-function in finite time, it is natural to seek solutions for \eqref{DropModSimpAlphaU} in the sense of measures, i.e.\ in the sense of distributions which are signed measures. The Cauchy problem for the system of zero-pressure gas dynamics  is constructed in \cite{Cheng1997} with measure solutions in the space $\mathcal{M}(\mathbb{R})$.  For initial conditions taken in the space of bounded measurable functions, the uniqueness of solutions to the zero-pressure gas dynamics equations is established in \cite{WANG1997341} with the Oleinik entropy condition. Huang and Wang \cite{Huang2001} established the uniqueness of the weak solution to zero-pressure gas dynamics equations when the initial condition is a Random measure. They showed that, besides the Oleinik entropy condition, it is also important to require the energy to be weakly continuous initially. This condition is called the energy condition. For initial data in the space of measures, it was proven in \cite{li2003} that the Oleinik condition is not sufficient to ensure uniqueness for measure solutions, but it has to be complemented with a cohesion condition.  The Lax's entropy condition \eqref{BurgersSourceTermEntropCond} is used in \cite{Vacuum,CHEN2004,Sheng} as a first step towards the uniqueness of delta-shock solutions to the system of zero-pressure gas dynamics.

In the following, we construct a delta-shock solution to \eqref{DropModSimpAlphaU} that satisfies the Lax's entropy condition \eqref{BurgersSourceTermEntropCond}. Theorem \ref{DropModSimpAlphaUMainTheo} states that the solution $(\alpha,u)$ is smooth on both side of the curve $\Gamma$. From \eqref{SolVolFRacOnCharac}, we get $\alpha(x,t)=\alpha_0(x_0)$ on both sides of $\Gamma$ since $u_0'=0$ on both sides of this curve. Hence, the left and right states for the solution $\alpha$ are determined by the initial data, that is
\begin{equation}
\alpha_l(x,t)=\alpha_l(t)=\alpha_- \quad \text{and} \quad \alpha_r(x,t)=\alpha_r(t)=\alpha_+.
\label{Alphaleftright}
\end{equation}
By Corollary \ref{BurgersTheoShockSol}, $u$ is given by \eqref{BurgersSourceTermShockSol}, and therefore the limit states  are given by \eqref{UleftUright}.  Substituting $\eqref{UleftUright}$, $\eqref{ShockSpeed}$ and $\eqref{Alphaleftright}$ in the first equation in \eqref{GenRHCDropModSimpAlphaU} and integrating the latter, we obtain the weighted function
\begin{equation}
\omega(t)=\omega_0+ \dfrac{(\alpha_++\alpha_-)(u_--u_+)}{2\mu}\big(1-e^{-\mu t}\big).
\label{Weightfunction}
\end{equation}
For the  Riemann problem, we take $\omega_0=0$. In general, one can start with a $\delta$-shock solution as an initial condition and $\omega_0$ is not necessarily zero. We can now state the following result:
\begin{cor}
If $u_->u_+$ then  the Riemann problem \eqref{DropModSimpAlphaU} and \eqref{DropModInAlphaURPInitCond} has a $\delta$-shock solution
given by
 \begin{equation}
(\alpha,u)(x,t)=
 \left\lbrace
  \begin{aligned}
   &\big(\alpha_-,u_l(x,t)\big),\qquad x<\xi(t),\\
   &\big(\omega(t)\delta(x-\xi(t)),\sigma(t)\big),\quad x=\xi(t),\\
   &\big(\alpha_+,u_r(x,t)\big),\qquad x>\xi(t),
  \end{aligned}
 \right.
 \label{DropModAlphaUShockSol}
\end{equation}
where  $u_l$, $u_r$ are given in \eqref{UleftUright}, $\omega$ is given in \eqref{Weightfunction} with $\omega_0=0$,  $\sigma$  and $\xi$ are given by \eqref{ShockSpeed} and \eqref{DisconCurve}, respectively.
\end{cor}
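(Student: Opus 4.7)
The plan is to invoke Theorem \ref{DropModSimpAlphaUMainTheo} directly, verifying that the candidate $(\alpha,u)$ displayed in \eqref{DropModAlphaUShockSol} satisfies its four conditions, together with the Lax entropy condition \eqref{BurgersSourceTermEntropCond} on $\Gamma$.

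For condition (i), the velocity $u^0$ is by construction the Burgers shock solution of Corollary \ref{BurgersTheoShockSol}, so it already satisfies \eqref{BurgersSourceTerm} classically on each side of $\Gamma$. For the volume fraction, on each side $\alpha^0$ equals the constant $\alpha_\pm$ while $u^0$ depends only on $t$, so both $\partial_t\alpha^0$ and $\partial_x(\alpha^0 u^0)=\alpha^0\partial_x u^0$ vanish, and the continuity equation holds trivially. This is also consistent with \eqref{SolVolFRacOnCharac}: since $u_0'\equiv 0$ on either side of the origin, that formula reduces to $\alpha(x,t)=\alpha_0(x_0)=\alpha_\mp$ on the respective sides, justifying the identification $\alpha_l=\alpha_-$ and $\alpha_r=\alpha_+$ used in \eqref{Alphaleftright}.

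Condition (ii) is read off by evaluating \eqref{UleftUright} at $t=0$ and using the constancy of $\alpha^0$ on each side, and condition (iv) follows because \eqref{Weightfunction} gives $\omega(0)=0=\omega_0$. For condition (iii), the algebraic Rankine-Hugoniot relation for $u$ is exactly the one verified in Corollary \ref{BurgersTheoShockSol} with $\sigma$ as in \eqref{ShockSpeed}. I would then substitute $\alpha_l=\alpha_-$, $\alpha_r=\alpha_+$ together with \eqref{UleftUright} and \eqref{ShockSpeed} into the ODE for $\omega$ in \eqref{GenRHCDropModSimpAlphaU}; a short telescoping computation in $u_a$ gives
\begin{equation*}
\frac{d\omega}{dt}=\frac{\alpha_-+\alpha_+}{2}(u_--u_+)e^{-\mu t},
\end{equation*}
whose integration from $0$ with $\omega(0)=0$ reproduces \eqref{Weightfunction}. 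The entropy inequality on $\Gamma$ is inherited from Corollary \ref{BurgersTheoShockSol}, since the velocity traces and shock speed here coincide with those for the scalar problem, and $u_->u_+$ is precisely the hypothesis that gave \eqref{BurgersSourceTermEntropCond} there.

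The only piece requiring genuine care is the ODE computation for $\omega$: the source $\mu(u_a-u)$ is absent from \eqref{GenRHCDropModSimpAlphaU}, so I must check that the $u_a$-dependent parts of $u_l$, $u_r$, and $\sigma$ cancel exactly, leaving a right-hand side independent of $u_a$ and carrying a single decay factor $e^{-\mu t}$. No further ingredient is needed beyond Theorem \ref{DropModSimpAlphaUMainTheo} and Corollary \ref{BurgersTheoShockSol}; the corollary is essentially an assembly of pieces already proved.
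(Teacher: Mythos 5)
Your proposal is correct and follows essentially the same route as the paper: both invoke Theorem \ref{DropModSimpAlphaUMainTheo}, identify the piecewise-constant traces $\alpha_l=\alpha_-$, $\alpha_r=\alpha_+$ via \eqref{SolVolFRacOnCharac}, take $u$ from Corollary \ref{BurgersTheoShockSol}, and integrate the first equation of \eqref{GenRHCDropModSimpAlphaU} to recover \eqref{Weightfunction} with $\omega_0=0$. Your explicit check that the $u_a$-dependent terms cancel, leaving $\frac{d\omega}{dt}=\frac{(\alpha_-+\alpha_+)(u_--u_+)}{2}e^{-\mu t}$, is exactly the computation the paper performs implicitly.
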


\subsection{Two contact discontinuities with a vacuum state}
\label{SubsectTwoContDis}
Assume $u_-<u_+$. By Corollary \ref{BurgersRarSol}, $u$ is given by \eqref{BurgersSourceTermRarSol} and is independent of $x$ outside the region $\mathcal{S}$, hence 
\begin{equation}
\dfrac{D\alpha}{dt}=-\alpha\partial_x u=0, \quad \forall (x,t) \notin \mathcal{S}.
\label{AlphaOnCarac77}
\end{equation}
Outside $\mathcal{S}$, $\alpha(x,t)=\alpha_0(x_0)$ and is determined by the Riemann initial data, that is  
\begin{equation}
\alpha(x,t)=
\left\lbrace
\begin{aligned}
&\alpha_-, \quad x<X_1(t),\\
&\alpha_+, \quad x>X_2(t).
\end{aligned}
\right.
\end{equation}
Inside $\mathcal{S}$, $u$ is given by \eqref{RarSolInter}. We have:
\begin{equation}
\dfrac{D\alpha}{dt}=-\alpha\partial_x\overline{u}, \quad \forall (x,t) \in \mathcal{\overline{S}}.
\label{AlphaOnCarac7}
\end{equation}
The function $\alpha=0$ satisfies \eqref{AlphaOnCarac7}. Let $\epsilon>0$ and assume $\alpha\neq0$. One divides \eqref{AlphaOnCarac7} by $\alpha$ and integrates on both sides from $s=\epsilon$ to $s=t$ to obtain 
\begin{equation}
\alpha(x,t)=\dfrac{\big(1-e^{-\mu\epsilon}\big)e^{\mu t}\alpha(x,\epsilon)}{e^{\mu t}-1},\quad \forall (x,t) \in \mathcal{\overline{S}}.
\label{AlphaRarInter}
\end{equation}
Taking $\epsilon\rightarrow 0$ in \eqref{AlphaRarInter}, one obtains
\begin{equation}
\alpha(x,t)=0,\quad \forall (x,t) \in \mathcal{\overline{S}}.
\end{equation}
We reach the following result:
\begin{cor}
If $u_-<u_+$ then the solution of the Riemann problem \eqref{DropModSimpAlphaU} and \eqref{DropModInAlphaURPInitCond} is given by
\begin{eqnarray}
\big(\alpha,u\big)(x,t)=
 \left\lbrace
  \begin{aligned}
   &\big(\alpha_-,u_l(x,t)\big),\quad x<X_1(t),\\
   &\big(0,\overline{u}(x,t)\big),\quad X_1(t)\leq x \leq X_2(t),\\
   &\big(\alpha_+,u_r(x,t)\big),\quad x>X_2(t),\\
  \end{aligned}
 \right.
\label{VolFracRarSol}
\end{eqnarray}   
where $u_l$ and $u_r$ are given in \eqref{UleftUright}, $\overline{u}$ is given in \eqref{RarSolInter}, $X_1$ and $X_2$ are given in \eqref{X1} and \eqref{X2}, respectively.
\end{cor}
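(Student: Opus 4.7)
The plan is to reduce the corollary to the work already done in the subsection and to verify, piece by piece, that the three regions glue together into a weak solution. Since the second equation of \eqref{DropModSimpAlphaU} is exactly \eqref{BurgersSourceTerm}, Corollary \ref{BurgersRarSol} applied to the initial data $u_-<u_+$ immediately yields the $u$-component of \eqref{VolFracRarSol}: $u_l$ for $x<X_1(t)$, $\overline{u}$ in $\mathcal{S}$, and $u_r$ for $x>X_2(t)$, with $u$ continuous across both interfaces. It therefore remains to establish the $\alpha$-component and to check the interface conditions.

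For $\alpha$ outside $\mathcal{S}$, the key observation is that $u_l(t)$ and $u_r(t)$ in \eqref{UleftUright} do not depend on $x$, so $\partial_x u\equiv 0$ in each region. Consequently the characteristic form of the first equation of \eqref{DropModSimpAlphaU}, namely $D\alpha/dt=-\alpha\,\partial_x u$, collapses to $D\alpha/dt=0$. The curves $x=X_1(t)$ and $x=X_2(t)$ are themselves characteristics issued from $x_0=0^-$ and $x_0=0^+$, so the initial Riemann data propagate unchanged along characteristics: $\alpha\equiv\alpha_-$ for $x<X_1(t)$ and $\alpha\equiv\alpha_+$ for $x>X_2(t)$.

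For $\alpha$ inside $\mathcal{S}$, one differentiates \eqref{RarSolInter} to obtain $\partial_x\overline{u}=\mu/(e^{\mu t}-1)$ and plugs this into $D\alpha/dt=-\alpha\,\partial_x\overline{u}$. This is precisely the ODE whose integration from $s=\epsilon$ to $s=t$ yields \eqref{AlphaRarInter}. The $\epsilon\to 0^+$ limit is forced by the structure of the fan: the characteristics in $\mathcal{S}$ all emanate from the origin, along which there is no initial mass (the datum $\alpha_0$ is bounded), while the prefactor $(1-e^{-\mu\epsilon})$ vanishes linearly in $\epsilon$. This forces $\alpha\equiv 0$ throughout $\mathcal{\overline{S}}$, which is the vacuum state claimed in \eqref{VolFracRarSol}.

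Finally, one verifies that the jumps of $\alpha$ across $x=X_1(t)$ and $x=X_2(t)$ are admissible for the first conservation law. At $x=X_1(t)$ the propagation speed is $X_1'(t)=u_l(t)$, and by continuity of $u$ across this curve we also have $\overline{u}(X_1(t),t)=u_l(t)$; hence the Rankine-Hugoniot relation $\sigma[\alpha]=[\alpha u]$ becomes $u_l(t)(0-\alpha_-)=0\cdot u_l(t)-\alpha_-u_l(t)$, which holds identically. The same check at $x=X_2(t)$ with speed $u_r(t)$ works identically, so both interfaces are contact discontinuities and no singular mass concentrates there (consistent with taking $\omega\equiv 0$). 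The hardest point of the argument is the vacuum derivation in $\mathcal{S}$: one must argue that the limit $\epsilon\to 0^+$ in \eqref{AlphaRarInter} genuinely produces the zero function rather than a surviving measure at the apex of the fan, which is ultimately a consequence of the characteristics collapsing to the single point $(0,0)$ carrying no initial mass.
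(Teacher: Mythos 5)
Your proposal is correct and follows essentially the same route as the paper: apply Corollary \ref{BurgersRarSol} for $u$, use $D\alpha/dt=-\alpha\partial_x u$ to propagate $\alpha_\pm$ unchanged outside $\mathcal{S}$, and integrate the same ODE from $s=\epsilon$ to $s=t$ inside $\mathcal{S}$ before letting $\epsilon\to0$ to obtain the vacuum. Your added verification of the Rankine--Hugoniot relation $\sigma[\alpha]=[\alpha u]$ across $x=X_1(t)$ and $x=X_2(t)$, and your justification that the $\epsilon\to0^+$ limit leaves no residual mass at the apex of the fan, are sound refinements of what the paper leaves implicit.
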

\noindent
Note that $u$ is continuous while $\alpha$ might be discontinuous across the curves $x=X_1(t)$ and $x=X_2(t)$. This type of solution is called a \textit{two-contact-discontinuity}. The two-contact-discontinuity solution \eqref{VolFracRarSol} contains a vacuum state (region where $\alpha=0$). Vacuum states are important physical states in fluid mechanics and often yield singularities in the physical systems, which cause essential analytical and numerical difficulties (see \cite{DiPerna1983,FANG2012307,Hoff,LIN,lions1996,LIU1980}).

\section{Riemann problem for the Eulerian droplet model}
\label{SectRiemProbDropModSimp}
In this section we construct a solution to the Riemann problem for \eqref{DropModSimp} using the results from the previous sections. 

\subsection{Delta-shock waves}
\label{CompDeltaShock}
Assume $u_->u_+$. The characteristics overlap.  As pointed out in section \ref{PerteReg}, a bounded solution of \eqref{DropModSimp} does not exist.  Motivated by the discussion and results in the previous section, we seek for solution in the form given in \eqref{ShockFunctionsAlphaU}-\eqref{WeightedDeltaFunction}.  We define the following duality products between $\alpha$ and $u$ (seen as distributions) and test functions in $\mathcal{C}_0^{\infty}(\mathbb{R}\times\mathbb{R^+})$:
\begin{align}
&\langle \alpha,\psi\rangle=\int_0^{\infty}\int_{-\infty}^{\infty}\alpha^0\psi dxdt +\langle \omega(t)\delta_{\xi},\psi\rangle,\\
&\langle \alpha u,\psi\rangle= \int_0^{\infty}\int_{-\infty}^{\infty}\alpha^0u^0\psi dxdt +\langle \omega(t)\xi'(t)\delta_{\xi},\psi\rangle,\\
&\langle \alpha u^2,\psi\rangle= \int_0^{\infty}\int_{-\infty}^{\infty}\alpha^0(u^0)^2\psi dxdt +\langle \sigma(t)\omega(t)\xi'(t)\delta_{\xi},\psi\rangle,\\
&\langle\alpha(u_a-u),\psi\rangle= \int_0^{\infty}\int_{-\infty}^{\infty}\alpha^0(u_a-u^0)\psi dxdt +\langle (u_a-\sigma(t))\omega(t)\delta_{\xi},\psi\rangle,
\label{DefDisSol}
\end{align}
where $\sigma(t)=\xi'(t)$ satisfies 
\begin{equation}
\sigma(0)=\sigma_0\in \mathbb{R}.
\end{equation}
\begin{defn}
\label{Def2}
We say that a pair of distributions $(\alpha,u)$ as given in \eqref{ShockFunctionsAlphaU}-\eqref{WeightedDeltaFunction} is a \textbf{weak solution} of \eqref{DropModSimp} and \eqref{DropModInAlphaUInitCond} if 
\begin{align}
\label{WeakSol1}
&\langle \alpha, \psi_t \rangle + \langle \alpha u, \psi_x \rangle=-\int_{-\infty}^{\infty}\alpha_0(x)\psi(x,0)\,dx-\omega_0\psi(\xi(0),0),\\
&\langle \alpha u, \psi_t \rangle + \langle \alpha u^2, \psi_x \rangle + \mu\langle \alpha(u_a-u),\psi\rangle =-\int_{-\infty}^{\infty}\alpha_0(x)u_0(x)\psi(x,0)\,dx-\sigma_0\omega_0\psi(\xi(0),0),
\label{WeakSol2}
\end{align}
hold for all test functions $\psi \in \mathcal{C}_0^{\infty}(\overline{\mathbb{R}\times\mathbb{R^+}})$.
\end{defn} 
\noindent
We have the following characterization for a weak solution of \eqref{DropModSimp} and \eqref{DropModInAlphaUInitCond}:
\begin{theo}
\label{TheoShockSol}
A pair of distributions $(\alpha,u)$ as given in \eqref{ShockFunctionsAlphaU}-\eqref{WeightedDeltaFunction} is a weak solution of \eqref{DropModSimp} and \eqref{DropModInAlphaUInitCond} if and only if  the following properties are satisfied:
\begin{itemize}
\item[i)] $(\alpha^0,u^0)$ satisfies \eqref{DropModSimp} in the classical sense on both sides of the curve $\Gamma$;
\item[ii)] $\alpha^0(x,0)=\alpha_0(x)$ and $\alpha^0(x,0)u^0(x,0)=\alpha_0(x)u_0(x)$ for all $x\in \mathbb{R}$;
\item[iii)] the following ODEs are satisfied on the curve $\Gamma$:
\begin{eqnarray}
\left\lbrace
\begin{aligned}
&\dfrac{d\omega}{dt}=\big(\alpha_r(t)-\alpha_l(t)\big)\sigma(t)-\big(\alpha_r(t)u_r(t)-\alpha_l(t)u_l(t)\big),\qquad\qquad\qquad\qquad\qquad\qquad\qquad\\
&\dfrac{d(\omega\sigma)}{dt}=\big(\alpha_r(t)u_r(t)-\alpha_l(t)u_l(t)\big)\sigma(t)-\big(\alpha_r(t)u_r(t)^2-\alpha_l(t)u_l(t)^2\big)
 +\mu\big(u_a-\sigma(t)\big)\omega(t),
\end{aligned}
\right.
\label{GenRHCDropModSimp}
\end{eqnarray}
where $(\alpha_l(t),u_l(t))$ and $(\alpha_r(t),u_r(t))$ are the limit of the solution $(\alpha,u)$ when $(x,t)$ approaches $(\xi(t),t)$ from the left and the right, respectively;
\item[iv)] the following initial conditions are satisfied:
\begin{equation}
\omega(0)=\omega_0,\quad \sigma(0)\omega(0)=\sigma_0\omega_0.
\label{GenRHCDropModSimpInitCond}
\end{equation}
\end{itemize}
\end{theo}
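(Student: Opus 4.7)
The plan is to reproduce the strategy used in the proof of Theorem~\ref{DropModSimpAlphaUMainTheo}, adapted to the fully conservative system \eqref{DropModSimp}. The continuity equation \eqref{WeakSol1} coincides formally with \eqref{WeakSolModEE}, so that computation applies verbatim and yields the first ODE in \eqref{GenRHCDropModSimp} together with the initial condition $\omega(0)=\omega_0$. What remains is to treat the momentum equation \eqref{WeakSol2}, and then to combine the results through the same four-case argument on the support of the test function.

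For any $\psi\in\mathcal{C}_0^{\infty}(\overline{\mathbb{R}\times\mathbb{R^+}})$, I would expand the three duality products through \eqref{DefDisSol}, split the resulting space-time integrals along $\Gamma$ into $\{x<\xi(t)\}$ and $\{x>\xi(t)\}$, and integrate by parts on each side. The contributions carried by $\delta_{\xi}$ collapse to
\[
\int_0^\infty \omega(t)\sigma(t)\frac{d}{dt}\psi(\xi(t),t)\,dt+\mu\int_0^\infty (u_a-\sigma(t))\omega(t)\psi(\xi(t),t)\,dt,
\]
because $\psi_t(\xi(t),t)+\sigma(t)\psi_x(\xi(t),t)=\frac{d}{dt}\psi(\xi(t),t)$. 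A further integration by parts in $t$ on the first integral produces $-\int_0^\infty\frac{d(\omega\sigma)}{dt}\psi(\xi(t),t)\,dt$ together with the boundary term $-\sigma_0\omega_0\psi(\xi(0),0)$. Combined with the bulk integration by parts on each side of $\Gamma$, this decomposes \eqref{WeakSol2} into three independent blocks: the classical momentum equation $(\alpha^0u^0)_t+(\alpha^0(u^0)^2)_x=\mu\alpha^0(u_a-u^0)$ tested on each side of $\Gamma$; a line integral along $\Gamma$ whose integrand is precisely the second jump quantity in \eqref{GenRHCDropModSimp} minus $\frac{d(\omega\sigma)}{dt}$; and an initial-time boundary contribution equal to $-\int_{-\infty}^{\infty}\alpha_0u_0\psi(x,0)\,dx-\sigma_0\omega_0\psi(\xi(0),0)$.

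From this identity the forward implication proceeds exactly as in Theorem~\ref{DropModSimpAlphaUMainTheo}: taking $\psi$ supported away from $\Gamma$ and from $t=0$ yields the classical momentum equation in $(i)$; keeping $\psi$ zero on $\Gamma$ but allowing $\psi(\cdot,0)\neq 0$ and using $(i)$ gives the second initial condition in $(ii)$; a general $\psi\in\mathcal{C}_0^{\infty}(\mathbb{R}\times\mathbb{R^+})$ together with $(i)$–$(ii)$ localizes the line integral and delivers the second ODE in $(iii)$; finally a general $\psi$ with $\psi(\xi(0),0)\neq 0$ forces $\sigma(0)\omega(0)=\sigma_0\omega_0$, completing $(iv)$. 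The converse is immediate: if $(i)$–$(iv)$ all hold, each of the three blocks in the decomposition vanishes separately. I expect the main obstacle to be the bookkeeping of the distributional products \eqref{DefDisSol}; one must consistently use the prescribed singular values of $\alpha u^2$ and $\alpha(u_a-u)$ on $\Gamma$ rather than attempt to multiply singular objects pointwise, and one must recognize that the two $\delta_{\xi}$-contributions in the momentum equation glue together into $\frac{d(\omega\sigma)}{dt}$, which is precisely why \eqref{GenRHCDropModSimpInitCond} is posed on the product $\sigma(0)\omega(0)$ rather than on $\sigma(0)$ separately.
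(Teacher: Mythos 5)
Your proposal is correct and follows essentially the same route as the paper, whose proof of this theorem consists of the single instruction to proceed as in Theorem \ref{DropModSimpAlphaUMainTheo}; you have simply carried out that adaptation explicitly, correctly combining the $\delta_{\xi}$-contributions via $\psi_t(\xi(t),t)+\sigma(t)\psi_x(\xi(t),t)=\frac{d}{dt}\psi(\xi(t),t)$ and the temporal integration by parts that produces $-\frac{d(\omega\sigma)}{dt}$ and the boundary term $-\sigma_0\omega_0\psi(\xi(0),0)$. The four-case selection of test functions and the converse direction match the paper's template exactly, so no further comment is needed.
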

\begin{proof}
Proceed as in the proof of Theorem \ref{DropModSimpAlphaUMainTheo}.
\end{proof}

\begin{defn}
We call a \textbf{$\delta$-shock solution} or \textbf{delta-shock wave} of system \eqref{DropModSimp} and \eqref{DropModInAlphaUInitCond}, a weak solution of \eqref{DropModSimp} and \eqref{DropModInAlphaUInitCond} satisfying the Lax's entropy condition \eqref{BurgersSourceTermEntropCond}.
\end{defn}
\noindent
We now return to the solution of the Riemann problem for \eqref{DropModSimp}. By Theorem \ref{TheoShockSol}, the solution of \eqref{DropModSimp} is smooth  on both sides of the curve $x=\xi(t)$. Hence, it is given by the solution of \eqref{DropModSimpAlphaU} on both sides of this curve.  We have the following result:
\begin{cor}
If $u_->u_+$ then the  Riemann problem  \eqref{DropModSimp} and \eqref{DropModInAlphaURPInitCond} has a $\delta$-shock solution  given by 
\begin{equation}
(\alpha,u)(x,t)=
 \left\lbrace
  \begin{aligned}
   &\big(\alpha_-,u_l(x,t)\big),\qquad x<\xi(t),\\
   &\big(\omega(t)\delta(x-\xi(t)),\sigma(t)\big),\quad x=\xi(t),\\
   &\big(\alpha_+,u_r(x,t)\big),\qquad x>\xi(t),
  \end{aligned}
 \right.
\label{DropModSimpShockSol}
\end{equation}
where $u_l$, $u_r$ are given by\eqref{UleftUright}, $(\omega,\sigma)$ is the solution of the GRH conditions \eqref{GenRHCDropModSimp}-\eqref{GenRHCDropModSimpInitCond} with $\omega_0=0$, and  $\xi$ is given by
\begin{equation}
\xi(t)=\int_0^{t}\sigma(s)\,ds.
\label{AA}
\end{equation}
\end{cor}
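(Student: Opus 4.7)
The plan is to verify the four conditions of Theorem \ref{TheoShockSol} for the candidate \eqref{DropModSimpShockSol} and then confirm the Lax entropy condition \eqref{BurgersSourceTermEntropCond}. For condition $(i)$, I would observe that on each side of $\Gamma$ the proposed smooth parts coincide with $(\alpha_\pm, u_{l,r}(x,t))$ from the $\delta$-shock solution constructed for \eqref{DropModSimpAlphaU} in Section \ref{SectRPDropModAlphaU}. Since this is a smooth solution of \eqref{DropModSimpAlphaU} with $\alpha^0 > 0$ on each side, and since any smooth solution of \eqref{DropModSimpAlphaU} with $\alpha \neq 0$ is also a solution of \eqref{DropModSimp} (as noted just after \eqref{DropModSimpAlphaU} in the introduction), condition $(i)$ follows. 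For condition $(ii)$, the piecewise constant restriction of $\alpha^0$ at $t=0$ matches $\alpha_0(x)$ away from the origin, and since $u_l(x,0) = u_-$, $u_r(x,0) = u_+$, the product $\alpha^0(x,0) u^0(x,0)$ matches $\alpha_0(x) u_0(x)$ piecewise, which is enough for a Riemann datum.

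For conditions $(iii)$-$(iv)$, substituting $\alpha_l = \alpha_-$, $\alpha_r = \alpha_+$ together with $u_l, u_r$ from \eqref{UleftUright} into the GRH system \eqref{GenRHCDropModSimp} yields a closed nonlinear system of ODEs for the unknowns $(\omega, \sigma)$ with the initial conditions $\omega(0) = 0$ and $\sigma(0)\omega(0) = 0$. I would invoke the existence result from Section \ref{GRHExisSol} to produce such a pair $(\omega, \sigma)$, and then define $\xi(t) = \int_0^t \sigma(s)\,ds$ so that $\xi(0)=0$ and $\xi'=\sigma$; this guarantees that the curve $\Gamma = \{x=\xi(t)\}$ starts at the origin and that the weight $\omega(t)\delta(x-\xi(t))$ is consistent with the Riemann initial data, i.e.\ no initial concentration of mass. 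With $(\omega, \sigma, \xi)$ in place, all four hypotheses of Theorem \ref{TheoShockSol} are satisfied, hence \eqref{DropModSimpShockSol} is a weak solution in the sense of Definition \ref{Def2}.

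It then remains to check that $u_r(t) < \sigma(t) < u_l(t)$ for all $t>0$. Using \eqref{UleftUright} and the hypothesis $u_- > u_+$, one has $u_l(t) - u_r(t) = (u_- - u_+)e^{-\mu t} > 0$, so the required inequalities reduce to a bound on the shock speed extracted from the ODE system; by analogy with the scalar case \eqref{ShockSpeed}, one expects $\sigma(t)$ to remain trapped between $u_l(t)$ and $u_r(t)$, degenerating to $u_a$ as $t\to\infty$. The main obstacle is precisely the construction and the qualitative behavior of $(\omega, \sigma)$: the initial data $\omega_0 = 0$ makes the initial condition $\sigma(0)\omega(0) = \sigma_0\omega_0$ degenerate and leaves $\sigma(0)$ undetermined by \eqref{GenRHCDropModSimpInitCond} alone, so existence is not immediate from standard Cauchy-Lipschitz theory. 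This is exactly the analytical core that Section \ref{GRHExisSol} is devoted to, and once that existence (along with the bound giving Lax's condition) is granted, the proof of the corollary is complete.
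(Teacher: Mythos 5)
Your proposal is correct and follows essentially the same route as the paper: the corollary is obtained by combining the characterization in Theorem \ref{TheoShockSol} (which forces the smooth parts on either side of $\Gamma$ to be the states $(\alpha_\pm,u_{l,r})$ inherited from \eqref{DropModSimpAlphaU}) with the deferred existence of an entropy-satisfying solution $(\omega,\sigma)$ to the GRH conditions, proved in Section \ref{GRHExisSol} via the regularized initial data $\omega_n(0)=1/n$ and Arzel\`a--Ascoli, with $\sigma_0$ pinned down by Proposition \ref{SigEntrop}. You correctly identify that the degeneracy at $\omega_0=0$ is the analytical core and that the corollary itself is then immediate, which is exactly how the paper structures the argument.
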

\noindent
We will prove in the next section the existence of a solution to the GRH conditions \eqref{GenRHCDropModSimp}-\eqref{GenRHCDropModSimpInitCond} satisfying the Lax's entropy condition \eqref{BurgersSourceTermEntropCond}.

\subsection{Two contact discontinuities with a vacuum state}
Assume $u_-<u_+$. The Riemann solution is constructed in the same manner as in  subsection \ref{SubsectTwoContDis}. It is given by the two-contact-discontinuity with vacuum state \eqref{VolFracRarSol}. 

The solution of the Riemann problem for \eqref{DropModSimp} is either a delta-shock wave or a two-contact-discontinuity. The systems  \eqref{DropModSimp} and \eqref{DropModSimpAlphaU} are equivalent for smooth and two-contact-discontinuity solutions but they differ for $\delta$-shock solutions. In fact, one can check with a tedious calculation that the solution of the DAE \eqref{GenRHCDropModSimpAlphaU}-\eqref{WeiFuncInitCond} does not satisfy the GRH conditions \eqref{GenRHCDropModSimp}-\eqref{GenRHCDropModSimpInitCond}.  

\section{Existence of a solution to the GRH conditions satisfying the Lax's entropy}
\label{GRHExisSol}
In this section we prove the existence of a solution to \eqref{GenRHCDropModSimp}-\eqref{GenRHCDropModSimpInitCond} satisfying the Lax's entropy condition \eqref{BurgersSourceTermEntropCond}. The following results hold:
\begin{lem} (\textbf{Growth of the point mass $\omega$})\\
\label{Lem}
Assume  that $\alpha_l(t),\alpha_r(t)>0$ and $u_l(t)>u_r(t)$ for all $t\geq0$. Suppose that there exists a solution $(\omega,\sigma)\in\mathcal{C}^1(\mathbb{R}^+)\times\mathcal{C}^1(\mathbb{R}^+)$ to the initial value problem \eqref{GenRHCDropModSimp}-\eqref{GenRHCDropModSimpInitCond}. If $\sigma$ satisfies \eqref{BurgersSourceTermEntropCond}  then
\begin{equation}
\omega(t_2)>\omega(t_1),\quad \forall t_2>t_1.
\label{OmgIncr1}
\end{equation}
In particular, for the solution of the Riemann problem \eqref{DropModSimp} and \eqref{DropModInAlphaURPInitCond}, we have
\begin{equation}
\omega(t)\geq \omega_0+\dfrac{K(1-e^{-\mu t})}{\mu},\quad \forall t\geq0,
\label{OmgIncr}
\end{equation}
where $K=\min\lbrace\alpha_-,\alpha_+\rbrace(u_--u_+)$.
\end{lem}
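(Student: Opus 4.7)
The strategy is to rewrite the first equation of the GRH system \eqref{GenRHCDropModSimp} in a form that makes the sign of $d\omega/dt$ transparent under the Lax entropy condition, and then integrate an elementary lower bound specialized to the Riemann data.

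First I would regroup the right-hand side of the first GRH equation by adding and subtracting $\alpha_r\sigma$ and $\alpha_l\sigma$ symmetrically:
\begin{equation*}
\frac{d\omega}{dt}(t) = \alpha_r(t)\bigl(\sigma(t)-u_r(t)\bigr) + \alpha_l(t)\bigl(u_l(t)-\sigma(t)\bigr).
\end{equation*}
Under the hypotheses $\alpha_l(t),\alpha_r(t)>0$ and under the Lax entropy condition \eqref{BurgersSourceTermEntropCond}, both parentheses are strictly positive; hence $d\omega/dt>0$ for all $t\geq 0$, which gives \eqref{OmgIncr1} by integration.

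Next, specializing to the Riemann problem, the classical part of the delta-shock solution \eqref{DropModSimpShockSol} gives $\alpha_l(t)\equiv\alpha_-$ and $\alpha_r(t)\equiv\alpha_+$, while $u_l,u_r$ are the Burgers-type left/right states from \eqref{UleftUright}. Bounding each coefficient below by $\min\{\alpha_-,\alpha_+\}$ in the regrouped expression and using again that $u_l-\sigma>0$ and $\sigma-u_r>0$, I get
\begin{equation*}
\frac{d\omega}{dt}(t) \geq \min\{\alpha_-,\alpha_+\}\bigl[(u_l(t)-\sigma(t))+(\sigma(t)-u_r(t))\bigr] = \min\{\alpha_-,\alpha_+\}\bigl(u_l(t)-u_r(t)\bigr).
\end{equation*}
Substituting \eqref{UleftUright} gives $u_l(t)-u_r(t)=(u_--u_+)e^{-\mu t}$, so $d\omega/dt\geq K e^{-\mu t}$ with $K=\min\{\alpha_-,\alpha_+\}(u_--u_+)$. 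Integrating from $0$ to $t$ with $\omega(0)=\omega_0$ yields \eqref{OmgIncr}.

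I do not expect a serious obstacle here: the argument is a two-line regrouping followed by a one-line integration. The only point requiring a bit of care is that the Lax entropy condition \eqref{BurgersSourceTermEntropCond} is assumed on $\sigma$ in the first part and known to hold for the explicit Riemann $\sigma$ in the second part (it was verified in Section~\ref{RPBurgersSourceTerm} via \eqref{ShockSpeed}), so the two halves of the lemma match cleanly. The monotonicity in \eqref{OmgIncr1} is strict because $u_l-u_r>0$ and $\min\{\alpha_l,\alpha_r\}>0$, which prevents $d\omega/dt$ from vanishing on any subinterval.
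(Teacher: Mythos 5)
Your proof is correct and follows essentially the same route as the paper: the key estimate $\tfrac{d\omega}{dt}\geq\min\{\alpha_l,\alpha_r\}\,(u_l-u_r)>0$ and the final integration of $Ke^{-\mu t}$ are exactly the paper's. The only difference is presentational: your single regrouping $\tfrac{d\omega}{dt}=\alpha_r(\sigma-u_r)+\alpha_l(u_l-\sigma)$ collapses the paper's three-case analysis (on the sign of $\alpha_r-\alpha_l$) into one line, which is a cleaner way to reach the same bound.
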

\begin{proof}We proceed by cases. Let $t\geq0$.\\
\textit{\underline{Case 1}}: Assume that $\alpha_l(t)=\alpha_r(t)$. The first equation of \eqref{GenRHCDropModSimp} reduces to
\begin{align*}
\dfrac{d\omega}{dt}(t)=\alpha_l(t)\big(u_l(t)-u_r(t)\big)>0.
\end{align*}
\textit{\underline{Case 2}}: Assume that $\alpha_l(t)<\alpha_r(t)$. From the first inequality in \eqref{BurgersSourceTermEntropCond}, the first equation of \eqref{GenRHCDropModSimp} leads to
\begin{align*}
\dfrac{d\omega}{dt}(t)&=\big(\alpha_r(t)-\alpha_l(t)\big)\sigma(t)-\big(\alpha_r(t)u_r(t)-\alpha_l(t)u_l(t)\big)
>\alpha_l(t)\big(u_l(t)-u_r(t)\big)>0.
\end{align*}
\textit{\underline{Case 3}}: Assume that $\alpha_l(t)>\alpha_r(t)$. From the second inequality in \eqref{BurgersSourceTermEntropCond}, the first equation of \eqref{GenRHCDropModSimp} leads to
\begin{align*}
\dfrac{d\omega}{dt}(t)&=\big(\alpha_r(t)-\alpha_l(t)\big)\sigma(t)-\big(\alpha_r(t)u_r(t)-\alpha_l(t)u_l(t)\big)>\alpha_r(t)\big(u_l(t)-u_r(t)\big)>0.
\end{align*}
Combining these three cases, we obtain
\begin{equation}
\dfrac{d\omega}{dt}(t)>0,\quad \forall t\geq0.
\label{Help1}
\end{equation}
Hence, \eqref{OmgIncr1} holds. 
In particular, for the solution of the Riemann problem \eqref{DropModSimp} and \eqref{DropModInAlphaURPInitCond}, the limit states are given by
$\alpha_l(t)=\alpha_-$, $\alpha_r(t)=\alpha_+$, $u_l(t)=u_a+(u_--u_a)e^{-\mu t}$ and $u_l(t)=u_a+(u_+-u_a)e^{-\mu t}$. By setting $K=\min\lbrace\alpha_-,\alpha_+\rbrace(u_--u_+)$, one obtains
\begin{equation}
\dfrac{d\omega}{dt}(t)\geq Ke^{-\mu t},\quad \forall t\geq0.
\label{Help1111}
\end{equation}
Integrating this last inequality  on both sides from $0$ to $t$, we get
\begin{equation}
\omega(t)\geq\omega(0)+K\int_0^te^{-\mu s}\,ds=\omega_0+\dfrac{K(1-e^{-\mu t})}{\mu},\quad\forall t\geq0.
\end{equation}
\end{proof}

\begin{prop} (\textbf{The shock speed satisfies the Lax's entropy condition})\\
\label{DeltaShockEntropCond}
Assume that $\alpha_l(t), \alpha_r(t)>0$, $u_l(t)>u_r(t)$ for all $t\geq0$ and $\sigma_0 \in (u_r(0),u_l(0))$. If $(\omega,\sigma)\in\mathcal{C}^1(\mathbb{R}^+)\times\mathcal{C}^1(\mathbb{R}^+)$ is a solution to the initial value problem \eqref{GenRHCDropModSimp}-\eqref{GenRHCDropModSimpInitCond} then $\sigma$ satisfies the Lax's entropy condition \eqref{BurgersSourceTermEntropCond} for all $t\geq0$.
\end{prop}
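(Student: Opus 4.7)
The plan is to derive from the GRH conditions a single ODE for $\sigma$ and then argue by an invariant–interval (barrier) argument at the boundary values $\sigma=u_l$ and $\sigma=u_r$. First, expanding $d(\omega\sigma)/dt=\omega'\sigma+\omega\sigma'$ in the second equation of \eqref{GenRHCDropModSimp} and eliminating $\omega'$ via the first equation, the right-hand side telescopes into a difference of squares and I obtain the key identity
\[
\omega(t)\sigma'(t)=\alpha_l(t)\bigl(\sigma(t)-u_l(t)\bigr)^2-\alpha_r(t)\bigl(\sigma(t)-u_r(t)\bigr)^2+\mu\bigl(u_a-\sigma(t)\bigr)\omega(t).
\]
This identity is tailor-made for the barrier argument: when $\sigma=u_l$ the first square vanishes and only the sign-definite term $-\alpha_r(u_l-u_r)^2$ survives (up to the relaxation term), and symmetrically at $\sigma=u_r$.

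Next, I would recall from \eqref{UleftUright} that in the Riemann setting the one-sided limits satisfy $u_l'(t)=\mu(u_a-u_l(t))$ and $u_r'(t)=\mu(u_a-u_r(t))$, obtained by direct differentiation. Set
\[
T^{\ast}=\sup\bigl\{T\ge 0 : u_r(t)<\sigma(t)<u_l(t)\text{ for all }t\in[0,T]\bigr\}.
\]
By the hypothesis $\sigma_0\in(u_r(0),u_l(0))$ and continuity, $T^{\ast}>0$. Suppose for contradiction that $T^{\ast}<\infty$; then either $\sigma(T^{\ast})=u_l(T^{\ast})$ or $\sigma(T^{\ast})=u_r(T^{\ast})$. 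Since the Lax condition holds on $[0,T^{\ast})$ by the very definition of $T^{\ast}$, Lemma \ref{Lem} applies on that interval and, passing to the limit with $\omega_0\ge 0$ and $K=\min\{\alpha_-,\alpha_+\}(u_--u_+)>0$, yields $\omega(T^{\ast})\ge \omega_0+K(1-e^{-\mu T^{\ast}})/\mu>0$. This resolves the only potential circularity, since Lemma \ref{Lem} is invoked only on an interval where the Lax condition is already known to hold.

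Finally, if $\sigma(T^{\ast})=u_l(T^{\ast})$, evaluating the identity above at $T^{\ast}$ and subtracting $u_l'(T^{\ast})=\mu(u_a-u_l(T^{\ast}))$ gives
\[
\sigma'(T^{\ast})-u_l'(T^{\ast})=-\dfrac{\alpha_r(T^{\ast})\bigl(u_l(T^{\ast})-u_r(T^{\ast})\bigr)^2}{\omega(T^{\ast})}<0,
\]
contradicting the fact that $\sigma-u_l$ approaches $0$ from below at $T^{\ast}$, which would force $(\sigma-u_l)'(T^{\ast})\ge 0$. The symmetric case $\sigma(T^{\ast})=u_r(T^{\ast})$ produces
\[
\sigma'(T^{\ast})-u_r'(T^{\ast})=\dfrac{\alpha_l(T^{\ast})\bigl(u_l(T^{\ast})-u_r(T^{\ast})\bigr)^2}{\omega(T^{\ast})}>0,
\]
contradicting $(\sigma-u_r)'(T^{\ast})\le 0$. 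Hence $T^{\ast}=\infty$ and the Lax entropy condition holds for all $t\ge 0$. The main obstacle is the algebraic reduction of the two GRH equations to the squared-difference form of $\omega\sigma'$ and the non-circular use of Lemma \ref{Lem} to divide by $\omega(T^{\ast})$; both are addressed as above.
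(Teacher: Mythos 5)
Your proposal is correct and follows essentially the same route as the paper: a first-crossing-time contradiction argument, with $\omega(t^\ast)>0$ secured by invoking Lemma \ref{Lem} on the interval where the Lax condition already holds, and the sign of $\omega(\sigma-u_l)'$ (resp.\ $\omega(\sigma-u_r)'$) at the touching time computed from the GRH relations together with $u_{l}'=\mu(u_a-u_l)$. Your difference-of-squares identity for $\omega\sigma'$ is just the paper's equation \eqref{Eq2} rewritten, and your one-sided-derivative contradiction at $T^\ast$ is equivalent to the paper's integration of $\tfrac{d(\sigma-u_l)}{dt}<0$ over a small left neighbourhood of $t^\ast$.
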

\begin{proof}
We proceed by contradiction. Suppose that there exists $t\geq0$ such that \eqref{BurgersSourceTermEntropCond} is not satisfied. From $\sigma(0)=\sigma_0\in(u_r(0),u_l(0))$ and the continuity of $\sigma$, there exists a smallest $t>0$, denoted $t^*$, such that
\begin{equation}
\sigma(t^*)=u_l(t^*) \text{ or } \sigma(t^*)=u_r(t^*) \text{ and } \sigma \text{ satisfies } \eqref{BurgersSourceTermEntropCond},\text{ } \forall t \in [0,t^*). 
\label{Absurde}
\end{equation} 
By Lemma \ref{Lem}, $\omega$ satisfies \eqref{OmgIncr1} on $[0,t^*)$. Combining this with the continuity of $\omega$, we get
\begin{equation}
\omega(t^*)>\omega(0)=\omega_0\geqslant0.
\label{WeightfunctionPos}
\end{equation}
Assume that  $\sigma(t^*)=u_l(t^*)$. Substituting $\sigma(t^*)$ by $u_l(t^*)$ in \eqref{GenRHCDropModSimp}, one can calculate
\begin{equation}
\begin{aligned}
\omega(t^*)\dfrac{d\sigma}{dt}(t^*)&=\dfrac{d(\omega\sigma)}{dt}(t^*)-\sigma(t^*)\dfrac{d\omega}{dt}(t^*)
=-\alpha_r(t^*)\big(u_l(t^*)-u_r(t^*)\big)^2+\mu\big(u_a-u_l(t^*)\big)\omega(t^*).
\end{aligned}
\label{OmgDifSig}
\end{equation}
Using this last equation, one gets
\begin{equation}
\begin{aligned}
\omega(t^*)\dfrac{d(\sigma-u_l)}{dt}(t^*)&=\omega(t^*)\dfrac{d\sigma}{dt}(t^*)-\omega(t^*)\dfrac{du_l}{dt}(t^*)
=-\alpha_r(t^*)\big(u_l(t^*)-u_r(t^*)\big)^2<0.
\end{aligned}
\label{Helps20}
\end{equation} 
Using \eqref{WeightfunctionPos}, we deduce from \eqref{Helps20} that $$\dfrac{d(\sigma-u_l)}{dt}(t^*)<0.$$ By the continuity of the function $t\mapsto \dfrac{d(\sigma-u_l)}{dt}(t)$, there exists $\epsilon>0$ such that $$\dfrac{d(\sigma-u_l)}{dt}(t)<0,\quad \forall t\in]t^*-\epsilon,t^*[.$$
Integrating this last inequality on both sides  from $t^*-\epsilon$ to $t^*$, we obtain
$$0\geq\int_{t^*-\epsilon}^{t^*}\dfrac{d(\sigma-u_l)}{ds}(s)ds=(\sigma-u_l)(t^*)-(\sigma-u_l)(t^*-\epsilon)=-\sigma(t^*-\epsilon)+u_l(t^*-\epsilon).$$
This last inequality implies that $\sigma(t^*-\epsilon)\geq u_l(t^*-\epsilon)$ which contradicts \eqref{Absurde}.\\
Assume that $\sigma(t^*)=u_r(t^*)$. A similar reasoning as above leads also to a contradiction of \eqref{Absurde}. 
Thus, the shock speed $\sigma$ satisfies \eqref{BurgersSourceTermEntropCond} for all $t\geq0$.
\end{proof}
\noindent
We next state the result for the existence of a solution to \eqref{GenRHCDropModSimp}-\eqref{GenRHCDropModSimpInitCond}  satisfying the \eqref{BurgersSourceTermEntropCond}. To simplify our notation, we set
\begin{eqnarray}
\begin{aligned}
&a(t)=\alpha_r(t)-\alpha_l(t),\quad b(t)=\alpha_r(t)u_r(t)-\alpha_l(t)u_l(t),\quad\\
&c(t)=\alpha_r(t)u_r(t)^2-\alpha_l(t)u_l(t)^2, \quad \theta(t)=\omega(t)\sigma(t).
\end{aligned}
\label{DefFunc}
\end{eqnarray}
The functions $a$, $b$ and $c$ are continuous and bounded for all $t\geqslant0$ since the limit states $(\alpha_l,u_l)$ and $(\alpha_r,u_r)$  are continuous and bounded. Substituting $a$, $b$, $c$ and $\theta$ in  \eqref{GenRHCDropModSimp}, this system  can be rewritten now as 
\begin{equation}
\left\lbrace
\begin{aligned}
&\dfrac{d\omega}{dt}(t)=a(t)\dfrac{\theta(t)}{\omega(t)}-b(t),\\
&\dfrac{d\theta}{dt}(t)=b(t)\dfrac{\theta(t)}{\omega(t)}+\mu\big(u_a\omega(t)-\theta(t)\big)-c(t).
\end{aligned}
\right.
\label{Mod2GenRHCDropModSimp}
\end{equation}
The initial value problem \eqref{GenRHCDropModSimp}-\eqref{GenRHCDropModSimpInitCond} can then be  rewritten in the following condensed form
\begin{equation}
\left\lbrace
\begin{aligned}
&\dfrac{d\mathbf{z}}{dt}=\mathbf{f}(\mathbf{z},t),\\
&\mathbf{z}(0)=(\omega_0,\theta_0)^T,\text{ with }\theta_0=\omega_0\sigma_0,
\end{aligned}
\right.
\label{CondGRH}
\end{equation}
where
\begin{equation}
\mathbf{z}(t)=
\begin{pmatrix}
\omega(t)\\
\theta(t)
\end{pmatrix}\quad
\text{and} \quad
\mathbf{f}(\mathbf{z},t)=
\begin{pmatrix}
f_1(\mathbf{z},t)\\
f_2(\mathbf{z},t)
\end{pmatrix}=
\begin{pmatrix}
a(t)\dfrac{\theta(t)}{\omega(t)}-b(t)\\
b(t)\dfrac{\theta(t)}{\omega(t)}+\mu\big(u_a\omega(t)-\theta(t)\big)-c(t)
\end{pmatrix}.
\label{CondGRHDetails}
\end{equation}
\begin{theo}(\textbf{Existence of a solution to the GRH conditions})\\
\label{GRHExistence}
If $\alpha_-,\alpha_+>0$, $u_->u_+$ and $\sigma_0 \in (u_+,u_-)$ then the GRH conditions \eqref{GenRHCDropModSimp}-\eqref{GenRHCDropModSimpInitCond} for the solution of the Riemann problem \eqref{DropModSimp} and \eqref{DropModInAlphaURPInitCond} have a $\mathcal{C}^1$-solution $(\omega,\sigma)$ satisfying \eqref{BurgersSourceTermEntropCond} for all $t\geqslant 0$.
\end{theo}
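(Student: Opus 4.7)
The plan is to prove existence by a regularization argument, since the right-hand side $\mathbf{f}(\mathbf{z},t)$ in \eqref{CondGRH}--\eqref{CondGRHDetails} is singular at $\omega = 0$ while the Riemann data forces $\omega_0 = 0$ and therefore $\theta_0 = 0$. Classical Cauchy--Lipschitz is not applicable at $t = 0$, so the strategy is to perturb the singular datum, solve the resulting smooth problem, and pass to the limit using the a priori estimates already established in Lemma \ref{Lem} and Proposition \ref{DeltaShockEntropCond}.

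For each $\varepsilon > 0$ I would consider \eqref{CondGRH} with the perturbed data $\omega(0) = \varepsilon$ and $\theta(0) = \varepsilon \sigma_0$, so that $\sigma_\varepsilon(0) = \sigma_0 \in (u_+, u_-) = (u_r(0), u_l(0))$. Since $a$, $b$, $c$ defined in \eqref{DefFunc} are continuous and bounded in $t$ via \eqref{UleftUright}--\eqref{Alphaleftright}, $\mathbf{f}$ is locally Lipschitz on $\{\omega > 0\}$, and Cauchy--Lipschitz yields a unique local $\mathcal{C}^1$ solution $(\omega_\varepsilon, \theta_\varepsilon)$. Proposition \ref{DeltaShockEntropCond} then gives $\sigma_\varepsilon(t) \in (u_r(t), u_l(t))$ on the maximal interval of existence, and Lemma \ref{Lem} yields the uniform lower bound $\omega_\varepsilon(t) \geq \varepsilon + K(1 - e^{-\mu t})/\mu$. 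Because $u_l, u_r$ are explicit bounded functions, $\sigma_\varepsilon$ is bounded uniformly in $\varepsilon$; together with the lower bound on $\omega_\varepsilon$ this keeps the right-hand side of \eqref{CondGRH} bounded, so $(\omega_\varepsilon, \theta_\varepsilon)$ extends to $[0, \infty)$ and satisfies uniform-in-$\varepsilon$ bounds on $\omega_\varepsilon$, $\theta_\varepsilon$ and their derivatives on every fixed $[0, T]$.

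The passage $\varepsilon \to 0$ is then by Arzelà--Ascoli: after extraction, $(\omega_\varepsilon, \theta_\varepsilon)$ converges uniformly on compact subsets of $[0, \infty)$ to some $(\omega, \theta)$ with $\omega(0) = \theta(0) = 0$. The $\varepsilon$-independent part of the lower bound, $\omega(t) \geq K(1 - e^{-\mu t})/\mu > 0$ for $t > 0$, survives the limit, so $\sigma = \theta/\omega$ is well defined and $\mathcal{C}^1$ on $(0, \infty)$; passing to the limit in the integral form of \eqref{CondGRH} on each $[\delta, T]$ shows that $(\omega, \sigma)$ solves \eqref{GenRHCDropModSimp}--\eqref{GenRHCDropModSimpInitCond}. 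The Lax entropy condition then follows either by taking the limit of $u_r(t) < \sigma_\varepsilon(t) < u_l(t)$ and ruling out equality at any $t > 0$ through the contradiction argument of Proposition \ref{DeltaShockEntropCond}, or by applying that proposition directly to the limit on $[\delta, \infty)$ for any $\delta > 0$. The main obstacle is precisely the interplay between the singularity of $\mathbf{f}$ at $\omega = 0$ and the degenerate initial datum $\omega_0 = 0$: the regularization must produce uniform lower bounds that prevent $\omega_\varepsilon$ from collapsing back to zero as $\varepsilon \to 0$, which is exactly what Lemma \ref{Lem} delivers once the strict entropy inequality from Proposition \ref{DeltaShockEntropCond} has been secured.
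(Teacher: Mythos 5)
Your proposal is correct and follows essentially the same route as the paper: the paper regularizes with $\omega_n(0)=1/n$, $\theta_n(0)=\sigma_0/n$ (your $\varepsilon$), invokes part (i)/Cauchy--Lipschitz for the perturbed problems, uses Lemma \ref{Lem} and Proposition \ref{DeltaShockEntropCond} for the uniform bounds and the positive lower bound on $\omega$ away from $t=0$, and passes to the limit via Arzel\`a--Ascoli on $[\eta,T]$. The only cosmetic difference is that the paper takes the limit in the differential form of the system (via uniform convergence of derivatives) rather than in the integral form.
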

\begin{proof}
i) Suppose that $\omega_0>0$. Cauchy-P\'eano theorem (see \cite{crouzeix1984analyse}, p.~59) ensures the existence of a $\mathcal{C}^1$-solution $(\omega,\theta)$ to \eqref{CondGRH} on some interval $[0,t_1]$.  From Proposition \ref{DeltaShockEntropCond} and Lemma \ref{Lem}, $\omega(t)>0$ for all $t\in[0,t_1]$. This implies that the partial derivatives $\frac{\partial f_i}{\partial z_j}$ exist and are continuous on  $\mathbb{R}^+\times\mathbb{R}\times[0,t_1]$. Cauchy-Lipschitz existence theorem (see \cite{crouzeix1984analyse}, p.~65) ensures the uniqueness of this solution on the interval $[0,t_1]$. Define 
\begin{equation}
\sigma(t)=\frac{\theta(t)}{\omega(t)},\quad \forall t\in[0,t_1].
\end{equation}
Clearly, $(\omega,\sigma)$ is a $\mathcal{C}^1$-solution of \eqref{GenRHCDropModSimp}-\eqref{GenRHCDropModSimpInitCond} on $[0,t_1]$.  Using the  Lemmas 3.2 and 3.3 in \cite{crouzeix1984analyse}, Proposition \ref{DeltaShockEntropCond} and Lemma \ref{Lem}, this solution can be extended to all $t\geq0$. 

ii) Suppose that $\omega_0=0$. Take any finite $T>0$.  Let $F_1=\lbrace(\omega_n,\theta_n)\rbrace_{n\geq1}$ be the family of functions such that $(\omega_n,\theta_n)$ is a $\mathcal{C}^1$-solution of \eqref{CondGRH} on the interval $[0,T]$ satisfying the initial conditions $\omega_n(0)=\frac{1}{n}$ and $\theta_n(0)=\frac{\sigma_0}{n}$.  Moreover, the functions  $\omega_n$ satisfy \eqref{OmgIncr1} and \eqref{OmgIncr}. The existence of the family $F_1$ follows from part $(i)$.
Let us prove that $F_1$ is a family of bounded and equicontinuous functions on the interval $[0,T]$.  For all $n\geq1$, we have:
\begin{equation}
\left\lbrace
\begin{aligned}
&\dfrac{d\omega_{n}}{dt}(t)=a(t)\sigma_{n}(t)-b(t),\\
&\dfrac{d\theta_n}{dt}(t)=b(t)\sigma_{n}(t)+\mu\big(u_a-\sigma_{n}(t)\big)\omega_{n}(t)-c(t).
\end{aligned}
\right.
\label{GenRHCFamily}
\end{equation}
Taking the absolute value in the  first equation of \eqref{GenRHCFamily},  we get 
\begin{align}
\left|\dfrac{d\omega_{n}}{dt}(t)\right|\leqslant \left|a(t)\right|\left|u_l(t)\right|+ \left|b(t)\right|\leqslant M,
\label{Helps5}
\end{align}
where $M=\max_{s\in[0,T]}\big\lbrace \vert a(s)\vert\vert u_l(s)\vert+\vert b(s)\vert\big\rbrace.$ Integrating $\omega_{n}'$ from $0$ to $t$, taking the absolute value and using \eqref{Helps5}, we get
\begin{align*}
\left|\omega_n(t)\right|&=\left|\omega_n(0)+\int_0^t \omega_{n}'(s)\,ds\right|
\leqslant \frac{1}{n}+\int_0^t\left|\omega_{n}'(s)\right|\, ds
\leqslant 1+MT, \quad \forall t\in[0,T].
\end{align*}
Hence, the functions $\omega_n$  and their first derivative are uniformly bounded on $[0,T]$. For all $n\geq1$, $\theta_n=\omega_n\sigma_n$ is bounded as a product of two bounded functions since $\sigma_n$ satisfies \eqref{BurgersSourceTermEntropCond} and $u_r(t)$, $u_l(t)$ are bounded. Furthermore, $\theta_{n}'(t)$ is also bounded since
\begin{align*}
\left|\dfrac{d\theta_n}{dt}(t)\right|&=\left|b(t)\sigma_{n}(t)+\mu\big(u_a-\sigma_{n}(t)\big)\omega_{n}(t)-c(t)\right|\\
&\leqslant \left|b(t)\right|\left|u_l(t)\right|+\mu\big(1+MT\big)\big(\left|u_a\right|+\left|u_l(t)\right|\big)+\left|c(t)\right|\\
&\leqslant Q=\max_{t\in[0,T]}\Big\lbrace \left|b(t)\right|\left|u_l(t)\right|+\mu\big(1+MT\big)\big(\left|u_a\right|+\left|u_l(t)\right|\big)+\vert c(t)\vert\Big\rbrace<\infty.
\end{align*}
Hence, $F_1$ is a family of bounded and equicontinuous functions at every point of the interval $[0,T]$.  Arzel\`a-Ascoli theorem ensures the existence of a subsequence $\lbrace(\omega_{n_k},\theta_{n_k})\rbrace \subset F_1$ that  converges uniformly to the continuous functions $(\omega,\theta)$ on the interval $[0,T]$. Since $\omega_{n_k}$ satisfies \eqref{OmgIncr} then $\omega$ is positive on any interval of the form $[\eta,T]$, with $0<\eta\leqslant T$.
Define the sequence of functions
\begin{equation}
\sigma_{n_k}(t)=\dfrac{\theta_{n_k(t)}}{\omega_{n_k}(t)}, \quad \forall t\in[0,T],
\label{Seq}
\end{equation}
and the function
\begin{equation}
\sigma(t)=\dfrac{\theta(t)}{\omega(t)},\quad \forall t\in[\eta,T].
\label{ConsSolGen}
\end{equation}
Clearly, $\sigma$ is a continuous on the interval $[\eta,T]$ as a quotient of two continuous functions. Let us prove that $\sigma_{n_k}$  converges uniformly to $\sigma$ on the interval $[\eta,T]$.  Let $t\in[\eta,T]$. We have:
\begin{equation}
\begin{aligned}
\left|\sigma_{n_k}(t)-\sigma(t)\right|&\leqslant \left|\frac{\theta_{n_k}(t)\omega(t)+\theta(t)\omega(t)-\theta(t)\omega(t)-\omega_{n_k}(t)\theta(t)}{\omega_{n_k}(\eta)\omega(\eta)}\right| \\
&\leqslant \dfrac{{\mu}^2}{K^2(1-e^{-\mu\eta})^2}\Big(\vert\theta(t)\vert\left|\omega_{n_k}(t)-\omega(t)\right|+\left|\omega(t)\right|\left|\theta_{n_k}(t)-\theta(t)\right|\Big).\\
\end{aligned}
\label{helps2}
\end{equation}
Set $$P=\dfrac{\max_{t\in[\eta,T]}\big\lbrace \vert\theta(t)\vert,\vert\omega(t)\vert\big\rbrace {\mu}^2}{K^2(1-e^{-\mu\eta})^2}<\infty.$$
Taking the supremum in \eqref{helps2}, we get 
\begin{eqnarray}
\sup_{t\in[\eta,T]}\vert\sigma_{n_k}(t)-\sigma(t)\vert \leqslant P\big(\sup_{t\in[\eta,T]}\left|\omega_{n_k}(t)-\omega(t)\right|+\sup_{t\in[\eta,T]}\left|\theta_{n_k}(t)-\theta(t)\right|\big).
\label{UnifConv2}
\end{eqnarray}
Hence, the sequence $\sigma_{n_k}$ converges uniformly to $\sigma$ on the interval $[\eta, T]$ since $\omega_{n_k}$ and $\theta_{n_k}$  converge uniformly to $\omega$ and $\theta$, respectively, on the interval $[0, T]$. Let us prove that $(\omega,\sigma)$ is a $\mathcal{C}^1$-solution of  \eqref{GenRHCDropModSimp} on the interval $[\eta,T]$. For all $n_k\geq1$, we have:
\begin{equation}
\left\lbrace
\begin{aligned}
&\dfrac{d\omega_{n_k}}{dt}(t)=a(t)\sigma_{n_k}(t)-b(t),\\
&\dfrac{d(\omega_{n_k}\sigma_{n_k})}{dt}=b(t)\sigma_{n_k}(t)+\mu\big(u_a-\sigma_{n_k}(t)\big)\omega_{n_k}(t)- c(t).
\end{aligned}
\right.
\label{GenRHCSubSeq}
\end{equation}
As $\omega_{n_k}$ and $\sigma_{n_k}$ converge  uniformly  to $\omega$ and $\sigma$, respectively, on the interval $[\eta,T]$ then the terms on the r.h.s of each equation of \eqref{GenRHCSubSeq} also converge uniformly on the interval $[\eta,T]$, i.e.\ the sequence $\lbrace(\frac{d\omega_{n_k}}{dt},\frac{d(\omega_{n_k}\sigma_{n_k})}{dt})\rbrace_{k\geq1}$  converges uniformly on the interval $[\eta,T]$. Take $n_k\rightarrow \infty$ in \eqref{GenRHCSubSeq}. Then, by using theorem 7.17 in \cite{rudin1976principles}, we obtain 
\begin{equation}
\left\lbrace
\begin{aligned}
&\dfrac{d\omega}{dt}(t)=\lim_{n_k\rightarrow\infty}\dfrac{d\omega_{n_k}}{dt}(t)=\lim_{n_k\rightarrow\infty}\Big(a(t)\sigma_{n_k}(t)-b(t)\Big)=a(t)\sigma(t)-b(t),\\
&\dfrac{d(\omega\sigma)}{dt}(t)=\lim_{n_k\rightarrow\infty}\dfrac{d(\omega_{n_k}\sigma_{n_k})}{dt}(t)=\lim_{n_k\rightarrow\infty}\Big(b(t)\sigma_{n_k}(t)+\mu\big(u_a-\sigma_{n_k}(t)\big)\omega_{n_k}(t)-c(t)\Big)\\
&\qquad\qquad\qquad\qquad\quad\qquad\qquad\text{ }=b(t)\sigma(t)+\mu\big(u_a-\sigma(t)\big)\omega(t)-c(t).
\end{aligned}
\right.
\label{GenRHCLimit}
\end{equation} 
The derivative of the functions $\omega$ and $\theta$ are continuous on the interval $[\eta,T]$ since the terms on the r.h.s of \eqref{GenRHCLimit} are continuous.  Hence, the couple $(\omega,\sigma)$ is a $\mathcal{C}^1$-solution of \eqref{GenRHCDropModSimp} on the interval $[\eta,T]$. Since $\eta>0$ is arbitrary then the $\mathcal{C}^1$-solution $(\omega,\sigma)$ can be extended to all $t$ in $(0,T]$. From the uniform convergence of $\omega_{n_k}$ to $\omega$ on the interval $[0,T]$, we get 
\begin{equation}
\omega(0)=\displaystyle\lim_{n_k\rightarrow \infty}\omega_{n_k}(0)=\displaystyle\lim_{n_k\rightarrow \infty}\dfrac{1}{n_k}=0=\omega_0.
\end{equation}
We set $$\sigma(0)=\sigma_0\in(u_+,u_-).$$
Thus, the $\mathcal{C}^1$-functions $(\omega,\sigma)$  satisfy the initial value problem \eqref{GenRHCDropModSimp}-\eqref{GenRHCDropModSimpInitCond} for all $t\in[0,T]$. This solution can be extended to all $t\geqslant0$ since $T>0$ is arbitrary.  By Proposition \ref{DeltaShockEntropCond}, the shock speed $\sigma$ satisfies \eqref{BurgersSourceTermEntropCond} for all $t\geqslant 0.$ 
\end{proof}
\begin{rem}
The generalized Rankine-Hugoniot conditions \eqref{GenRHCDropModSimp}-\eqref{GenRHCDropModSimpInitCond} reduce to the classical Rankine-Hugoniot conditions for a contact discontinuity or a two-contact-discontinuity solution. In fact, if $u_-=u_+$ then $\omega=0$.  Hence,  \eqref{GenRHCDropModSimp} reduces to the classical Rankine-Hugoniot conditions given by
\begin{eqnarray}
\left\lbrace
\begin{aligned}
&\big(\alpha_r(t)-\alpha_l(t)\big)\sigma(t)-\big(\alpha_r(t)u_r(t)-\alpha_l(t)u_l(t)\big)=0,\qquad\qquad\qquad\qquad\qquad\qquad\qquad\\
&\big(\alpha_r(t)u_r(t)-\alpha_l(t)u_l(t)\big)\sigma(t)-\big(\alpha_r(t)u_r(t)^2-\alpha_l(t)u_l(t)^2\big)=0.
\end{aligned}
\right.
\end{eqnarray}
\end{rem}
\noindent
Proposition \ref{DeltaShockEntropCond} stipulates that the initial shock speed $\sigma_0$ should belong to the interval $(u_+,u_-)$. The exact value for $\sigma_0$ is given by the following result:
\begin{prop}
\label{SigEntrop}
Assume that $\alpha_l(0),\alpha_r(0)>0$. Let $(\omega,\sigma)\in\mathcal{C}^1(\mathbb{R}^+_0)\times\mathcal{C}^1(\mathbb{R}^+_0)$  be a solution to the GRH conditions \eqref{GenRHCDropModSimp}-\eqref{GenRHCDropModSimpInitCond}. If $\omega_0=0$ then the  initial shock speed $\sigma_0$ satisfying the GRH conditions and the Lax's entropy condition \eqref{BurgersSourceTermEntropCond} at the origin is given by
\begin{equation}
\sigma_0=\dfrac{\sqrt{\alpha_r(0)}u_r(0)+\sqrt{\alpha_l(0)}u_l(0)}{\sqrt{\alpha_r(0)}+\sqrt{\alpha_l(0)}}. 
\label{OmegaSigma0Opt}
\end{equation}
\end{prop}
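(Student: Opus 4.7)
The plan is to evaluate the generalized Rankine--Hugoniot system \eqref{GenRHCDropModSimp} at $t=0$ and exploit $\omega_0 = 0$ to reduce the ODE system to a purely algebraic condition on $\sigma_0$. Since $\omega(0)=0$ and $\sigma(0)=\sigma_0$ is finite, the product satisfies $(\omega\sigma)(0)=0$, and by $\mathcal{C}^1$ regularity
\begin{equation*}
\frac{d(\omega\sigma)}{dt}(0)=\omega'(0)\,\sigma_0.
\end{equation*}
Writing the second GRH equation at $t=0$, the source term $\mu(u_a-\sigma(0))\omega(0)$ vanishes, leaving
\begin{equation*}
\omega'(0)\,\sigma_0 = b(0)\sigma_0 - c(0),
\end{equation*}
while the first GRH equation at $t=0$ gives $\omega'(0)=a(0)\sigma_0-b(0)$, with $a$, $b$, $c$ defined in \eqref{DefFunc}. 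Eliminating $\omega'(0)$ between these two relations yields the quadratic
\begin{equation*}
a(0)\,\sigma_0^2 - 2b(0)\,\sigma_0 + c(0) = 0.
\end{equation*}

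Next I would compute the discriminant. A direct expansion (using $a=\alpha_r-\alpha_l$, $b=\alpha_r u_r-\alpha_l u_l$, $c=\alpha_r u_r^2-\alpha_l u_l^2$, all evaluated at $0$) gives
\begin{equation*}
b(0)^2 - a(0)c(0) = \alpha_l(0)\,\alpha_r(0)\,\bigl(u_l(0)-u_r(0)\bigr)^2,
\end{equation*}
which is strictly positive by hypothesis. The two roots are therefore
\begin{equation*}
\sigma_0^{\pm} = \frac{b(0)\pm\sqrt{\alpha_l(0)\alpha_r(0)}\,\bigl(u_l(0)-u_r(0)\bigr)}{a(0)},
\end{equation*}
and after multiplying numerator and denominator by the conjugates $\sqrt{\alpha_r(0)}\mp\sqrt{\alpha_l(0)}$ these simplify to
\begin{equation*}
\sigma_0^{+} = \frac{\sqrt{\alpha_r(0)}\,u_r(0)+\sqrt{\alpha_l(0)}\,u_l(0)}{\sqrt{\alpha_r(0)}+\sqrt{\alpha_l(0)}},\qquad
\sigma_0^{-} = \frac{\sqrt{\alpha_r(0)}\,u_r(0)-\sqrt{\alpha_l(0)}\,u_l(0)}{\sqrt{\alpha_r(0)}-\sqrt{\alpha_l(0)}}.
\end{equation*}

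The last step is to invoke the Lax entropy condition \eqref{BurgersSourceTermEntropCond} at $t=0$ to select the correct root. Direct rearrangement produces
\begin{equation*}
\sigma_0^{+}-u_r(0) = \frac{\sqrt{\alpha_l(0)}\,(u_l(0)-u_r(0))}{\sqrt{\alpha_r(0)}+\sqrt{\alpha_l(0)}}>0,\qquad
u_l(0)-\sigma_0^{+} = \frac{\sqrt{\alpha_r(0)}\,(u_l(0)-u_r(0))}{\sqrt{\alpha_r(0)}+\sqrt{\alpha_l(0)}}>0,
\end{equation*}
so $\sigma_0^{+}\in(u_r(0),u_l(0))$, while an analogous computation for $\sigma_0^{-}$ yields $\sigma_0^{-}<u_r(0)$ when $\alpha_r(0)>\alpha_l(0)$ and $\sigma_0^{-}>u_l(0)$ when $\alpha_r(0)<\alpha_l(0)$; in either subcase the Lax condition fails. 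The degenerate case $\alpha_l(0)=\alpha_r(0)$ (where $a(0)=0$ and the quadratic reduces to a linear equation) must be treated separately but gives $\sigma_0=\tfrac12(u_l(0)+u_r(0))$, which coincides with the limit of \eqref{OmegaSigma0Opt}. I do not expect any serious obstacle: the only delicate points are the algebraic identity for the discriminant and the sign analysis distinguishing $\sigma_0^{+}$ from $\sigma_0^{-}$, both of which are routine once the quadratic is in hand.
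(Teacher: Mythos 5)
Your proposal is correct and follows essentially the same route as the paper: both reduce the GRH system at $t=0$ with $\omega_0=0$ to the quadratic $a(0)\sigma_0^2-2b(0)\sigma_0+c(0)=0$ (the paper by first substituting the first GRH equation into the second, you by applying the product rule to $(\omega\sigma)'$ at $t=0$), solve it, and select the root lying in $(u_r(0),u_l(0))$. Your treatment is marginally more complete in that you verify the rejection of $\sigma_0^{-}$ in both subcases $\alpha_r(0)\gtrless\alpha_l(0)$, whereas the paper only exhibits the failure for $\alpha_l(0)>\alpha_r(0)$.
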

\begin{proof}
By using the first equation of \eqref{GenRHCDropModSimp}, the second equation can be written as 
\begin{align}
\omega(t)\dfrac{d\sigma}{dt}(t)=&-a(t)\sigma(t)^2+2b(t)\sigma(t)-c(t) +\mu\big(u_a-\sigma(t)\big)\omega(t).
\label{Eq2}
\end{align}
 At $t=0$,  $\omega(0)=\omega_0=0$ and \eqref{Eq2} reduces to 
\begin{equation}
-a(0)\sigma(0)^2+2b(0)\sigma(0)-c(0)=0.
\label{SigmaQuadEqu}
\end{equation}
If $a(0)=0$ then \eqref{SigmaQuadEqu} has one solution given by
\begin{equation}
\sigma(0)=\dfrac{u_r(0)+u_l(0)}{2}\in(u_r(0),u_l(0))
\label{sig}
\end{equation}
that satisfies \eqref{BurgersSourceTermEntropCond} at the origin. If $a(0)\neq0$ then \eqref{SigmaQuadEqu} has two roots
\begin{align}
{\sigma(0)}_1=\dfrac{\sqrt{\alpha_r(0)}u_r(0)-\sqrt{\alpha_l(0)}u_l(0)}{\sqrt{\alpha_r(0)}-\sqrt{\alpha_l(0)}} \quad\text{and}\quad
{\sigma(0)}_2=\dfrac{\sqrt{\alpha_r(0)}u_r(0)+\sqrt{\alpha_l(0)}u_l(0)}{\sqrt{\alpha_r(0)}+\sqrt{\alpha_l(0)}}.
\label{root2}
\end{align} 
The first root ${\sigma(0)}_1$ does not always satisfy \eqref{BurgersSourceTermEntropCond}  at the origin. In fact,
\begin{equation}
\begin{aligned}
{\sigma_0}_1-u_l(0)=\dfrac{\sqrt{\alpha_r(0)}(u_r(0)-u_l(0))}{\sqrt{\alpha_r(0)}-\sqrt{\alpha_l(0)}}>0,\quad\text{if } \alpha_l(0)>\alpha_r(0).
\end{aligned}
\end{equation}
The second root ${\sigma(0)}_2\in (u_r(0),u_l(0))$ for all $\alpha_l(0),\alpha_r(0)>0$. Moreover, if $\alpha_l(0)=\alpha_r(0)$ then ${\sigma(0)}_2$ reduces to \eqref{sig}.
\end{proof}

For the solution of Riemann problem \eqref{DropModSimp} and \eqref{DropModInAlphaURPInitCond}, we proved the existence of a solution to the GRH conditions \eqref{GenRHCDropModSimp}-\eqref{GenRHCDropModSimpInitCond} satisfying the Lax's entropy  condition \eqref{BurgersSourceTermEntropCond}. In general, it might be hard to find the analytical solution  of \eqref{GenRHCDropModSimp}-\eqref{GenRHCDropModSimpInitCond}. At least for the Riemann problem, we are lucky to find an analytical solution of \eqref{GenRHCDropModSimp}-\eqref{GenRHCDropModSimpInitCond}  given by 
\begin{eqnarray}
\begin{aligned}
&\omega(t)=\omega_0+\dfrac{\sqrt{\alpha_-\alpha_+}(u_--u_+)}{\mu}\big(1-e^{-\mu t}\big),\\
&\sigma(t)=u_a+\left(\dfrac{\sqrt{\alpha_-}u_-+\sqrt{\alpha_+}u_+}{\sqrt{\alpha_-}+\sqrt{\alpha_+}}-u_a\right)e^{-\mu t}.
\end{aligned} 
\end{eqnarray}

\section{Numerical results}
\label{DropModSimpNumRes}
We perform some representative test cases to illustrate the theoretical analysis of delta-shock waves and vacuum states for the Eulerian droplet model. We employ  the Transport-Collapse method \cite{Bouchut2} to discretize the equations \eqref{DropModSimp} and we take  $u_a=1$.

\subsection{Riemann problem solution: Numerical solutions vs theoretical analysis}
We compute Riemann solutions for a delta-shock wave and a vacuum state. Recall that in real applications, the drag coefficient $\mu$ is not constant (depends on the Reynolds number of the particles). For physical applications with non-constant drag coefficient, we refer to \cite{Bourgault2,Bourgault4,Bourgault3,Gurris}. Here we assume $\mu=0.2$ is constant. For a delta-shock wave, we take the initial conditions
\begin{eqnarray}
\big(\alpha,u\big)(x,0)=
\left\lbrace
\begin{aligned}
&(0.008,1.5),\quad x\leqslant 0,\\
&(0.003,0.5),\quad x>0,
\end{aligned}
\right.
\label{DeltaShockInitCond}
\end{eqnarray}
which correspond to a physical case where initially the particles behind  move faster.  Theoretical  and numerical solutions are shown in \text{Figure} \ref{DeltaShockFig}. The particles behind catch those in front resulting in a huge concentration of particles leading the formation of point mass on the shock trajectory.
\begin{figure}[!h]
\centering
\begin{tabular}{cc}
\includegraphics[scale=0.42]{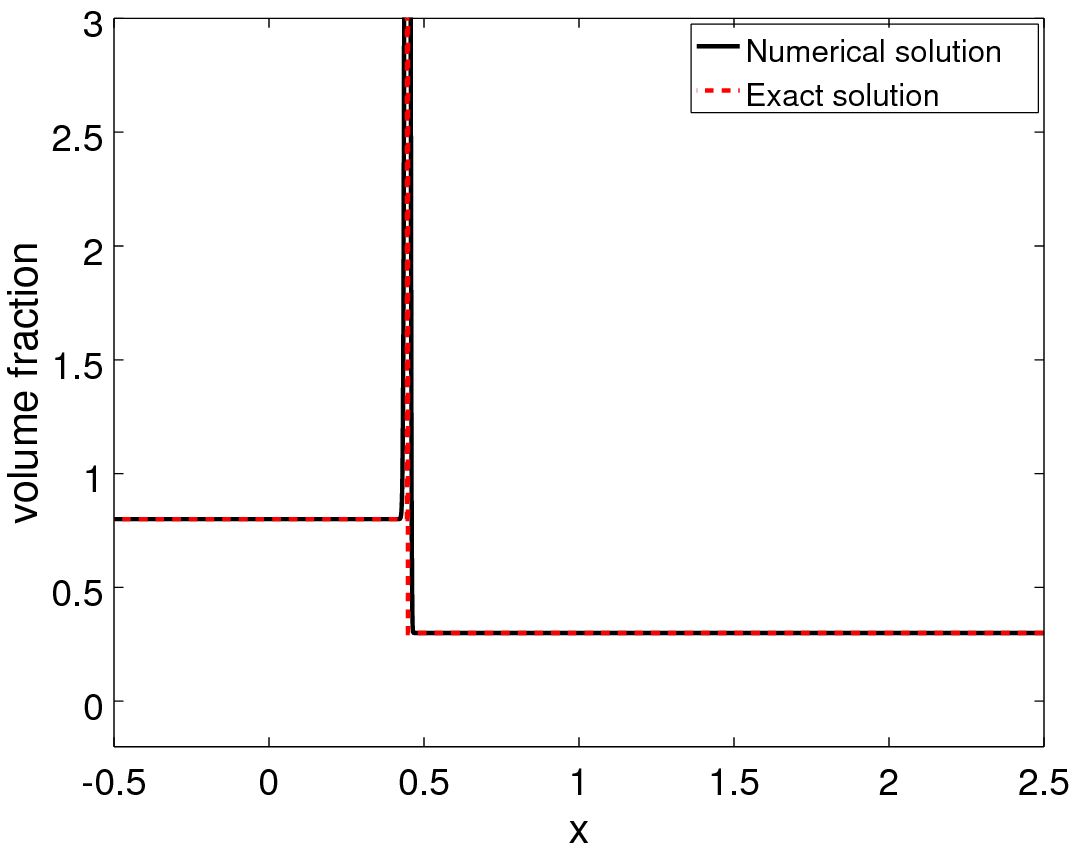}&\includegraphics[scale=0.42]{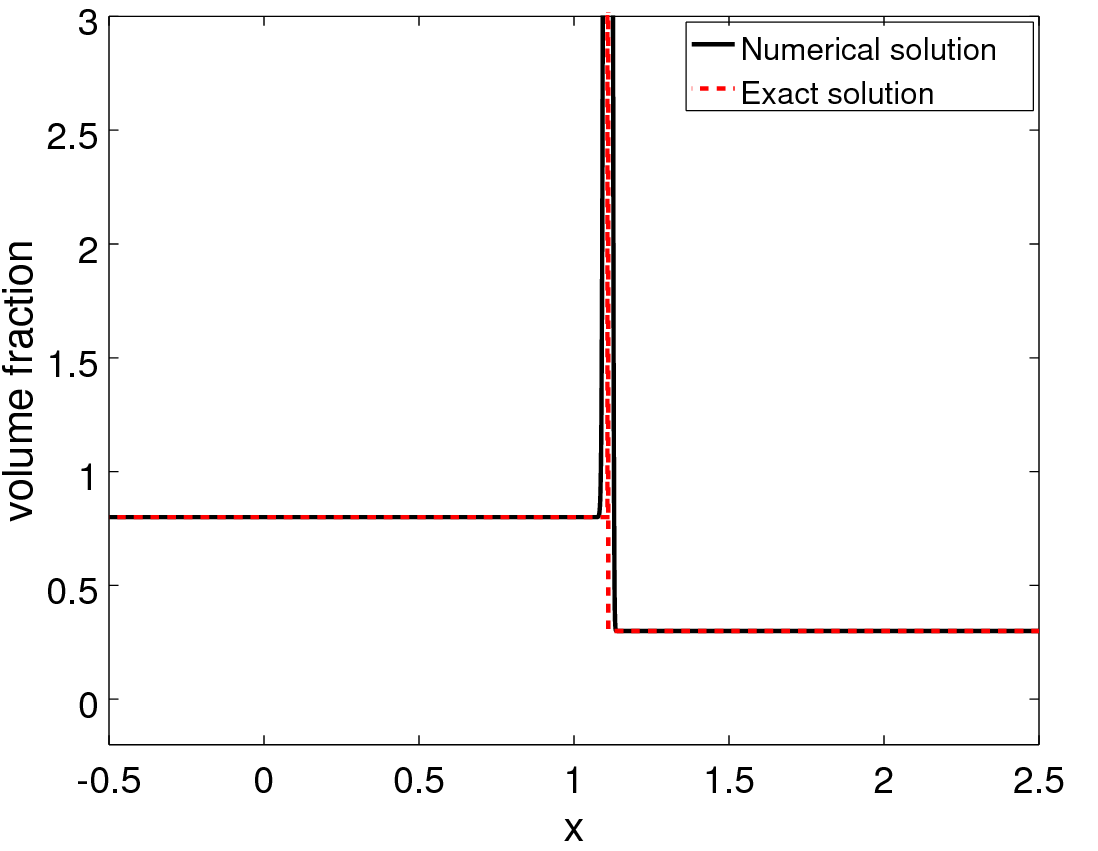}\\
\includegraphics[scale=0.42]{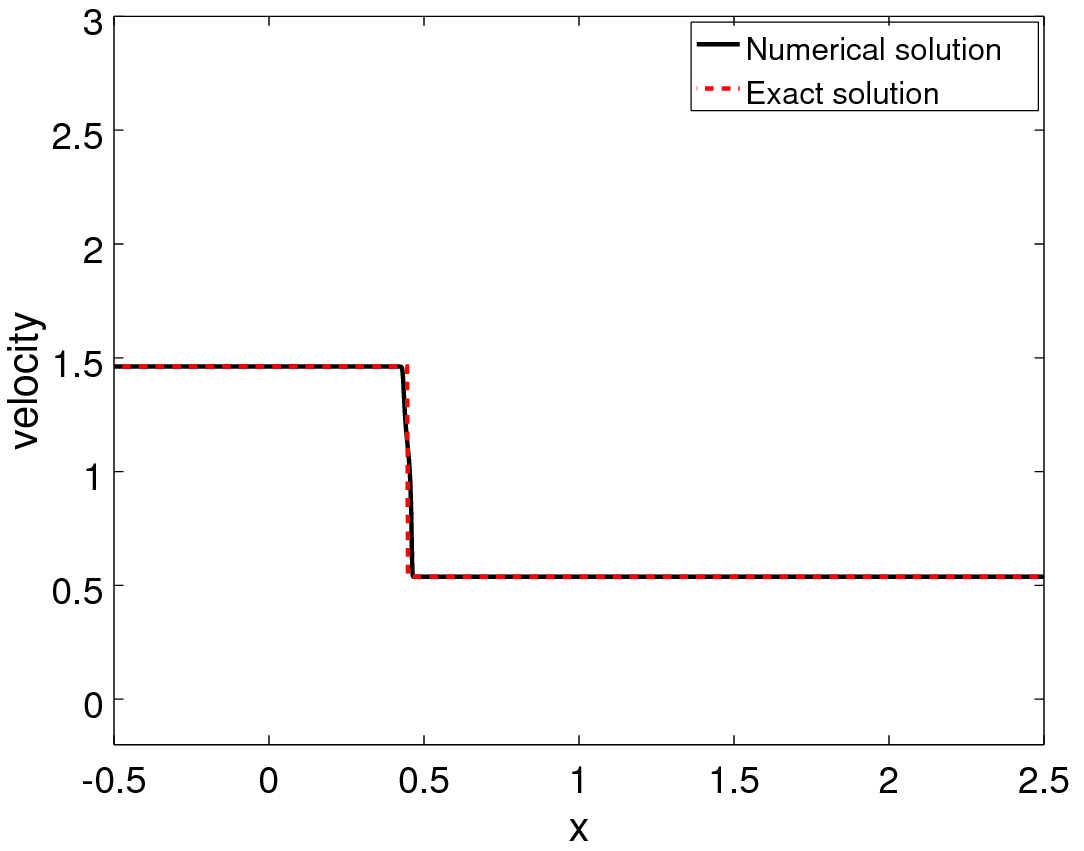}&\includegraphics[scale=0.42]{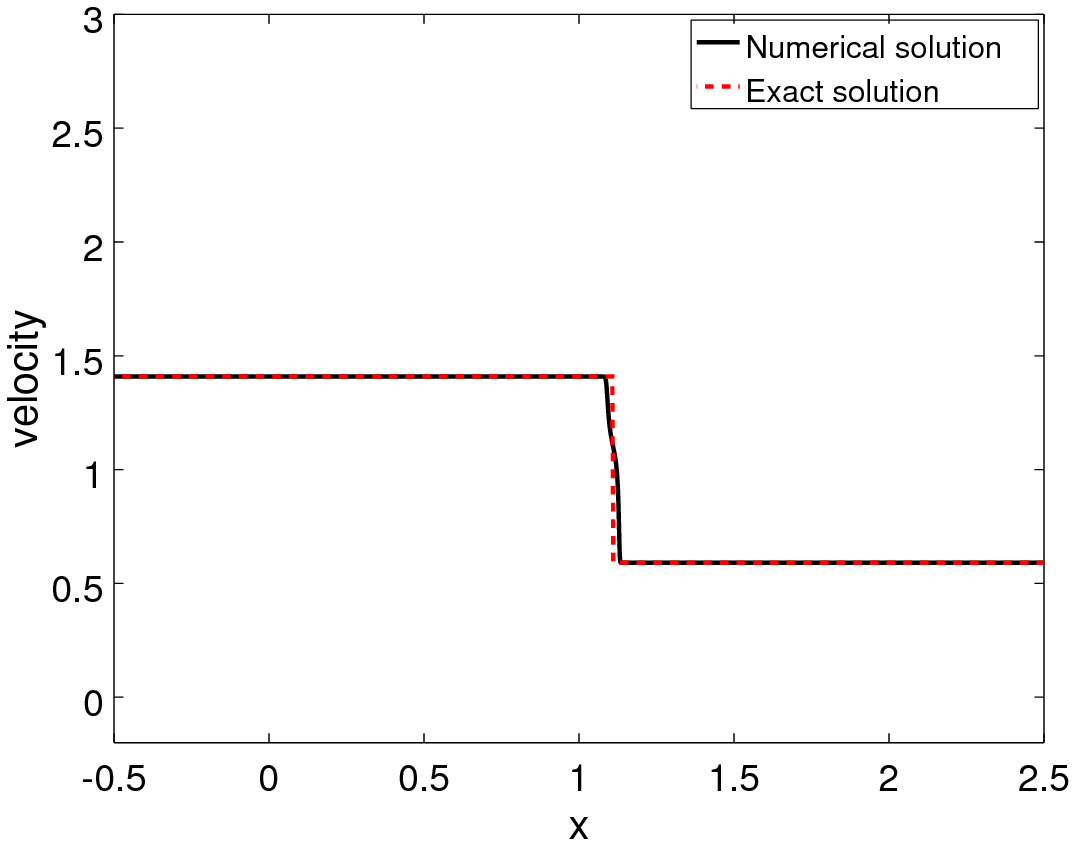}\\
$t=0.4$&$t=1$
\end{tabular}
\caption{A delta-shock wave of system \eqref{DropModSimp}: Exact and numerical solutions for two different times. $\mu=0.2$, $u_a=1$, $\Delta x=10^{-3}$ and $\Delta t=10^{-4}$.
\label{DeltaShockFig}}
\end{figure}

For a vacuum state, we take the initial conditions
\begin{eqnarray}
\big(\alpha,u\big)(x,0)=
\left\lbrace
\begin{aligned}
&(0.008,0.5),\quad x\leqslant 0,\\
&(0.003,1.5),\quad x>0,
\end{aligned}
\right.
\label{VacuumInitCond}
\end{eqnarray}
which correspond to a physical case where initially the particles in front move faster. Theoretical analysis and numerical solutions are represented in \text{Figure} \ref{VacuumFig}. We observe left and right nonvacuum states of particles delimiting a vacuum state, and moving at a continuous velocity. This theoretically corresponds to a two-contact-discontinuity with a vacuum state. 
\begin{figure}[!h]
\centering
\begin{tabular}{cc}
\includegraphics[scale=0.42]{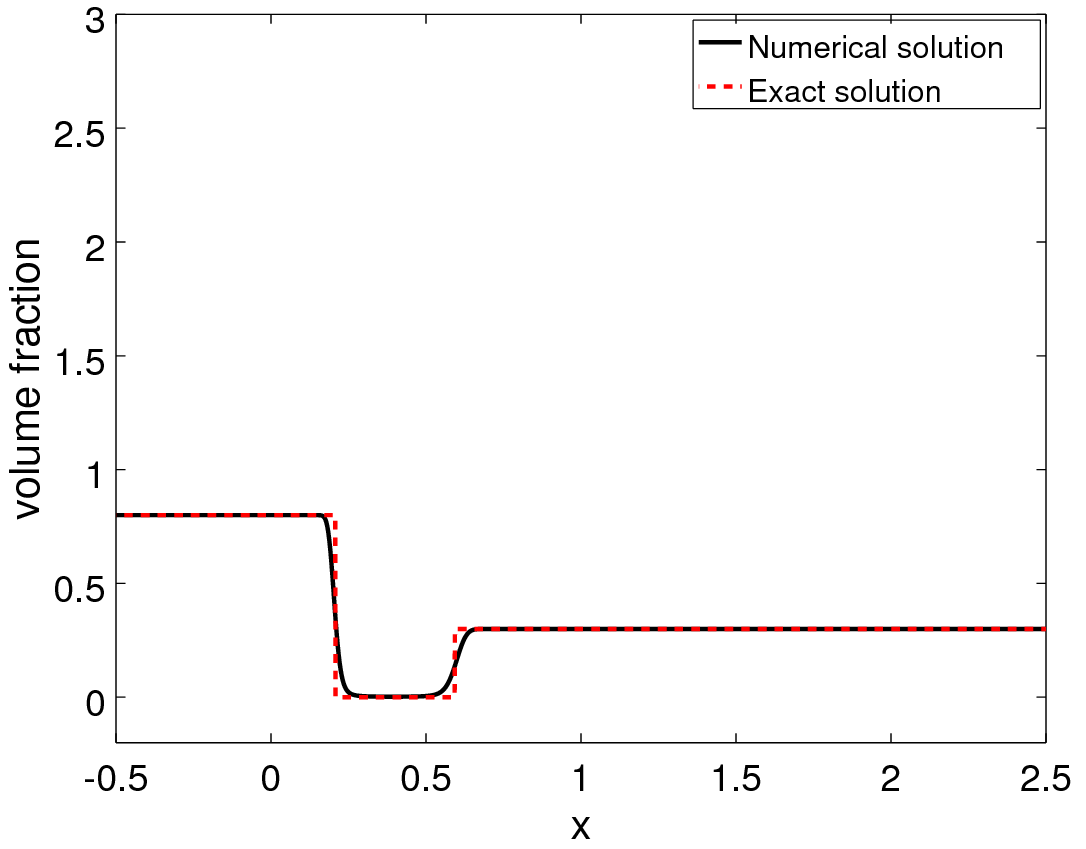}&\includegraphics[scale=0.42]{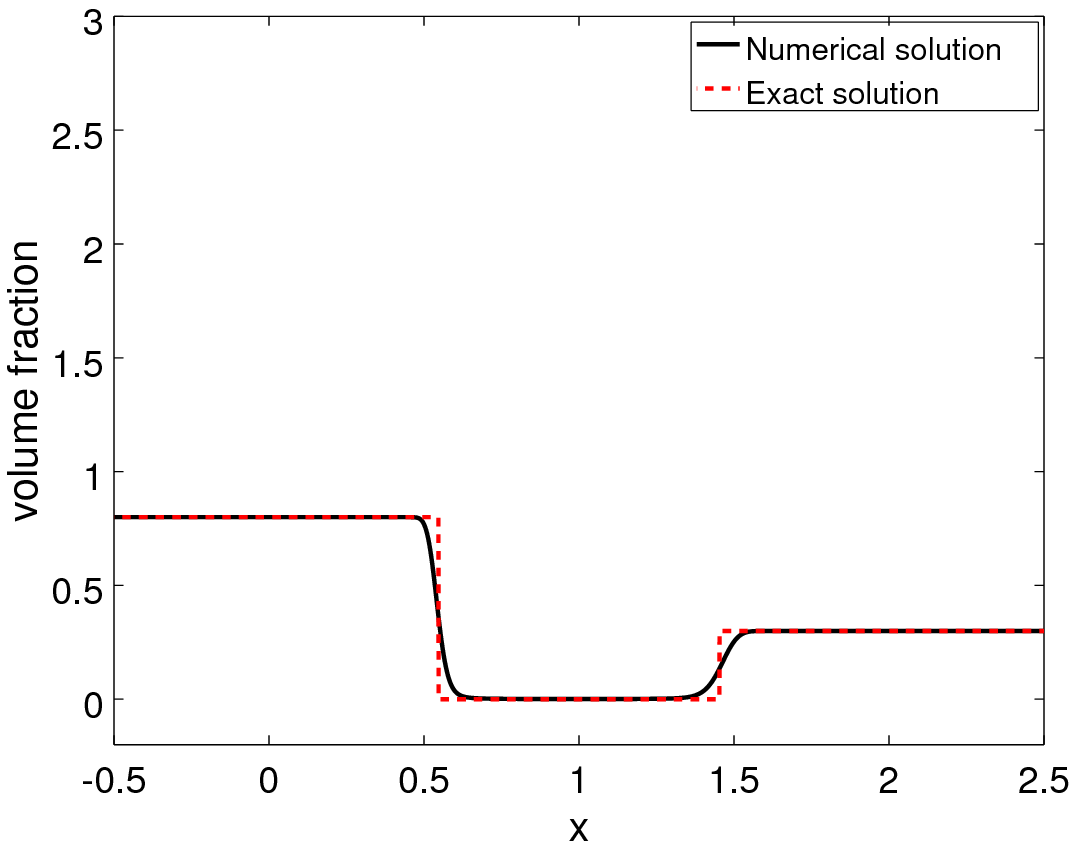}\\
\includegraphics[scale=0.42]{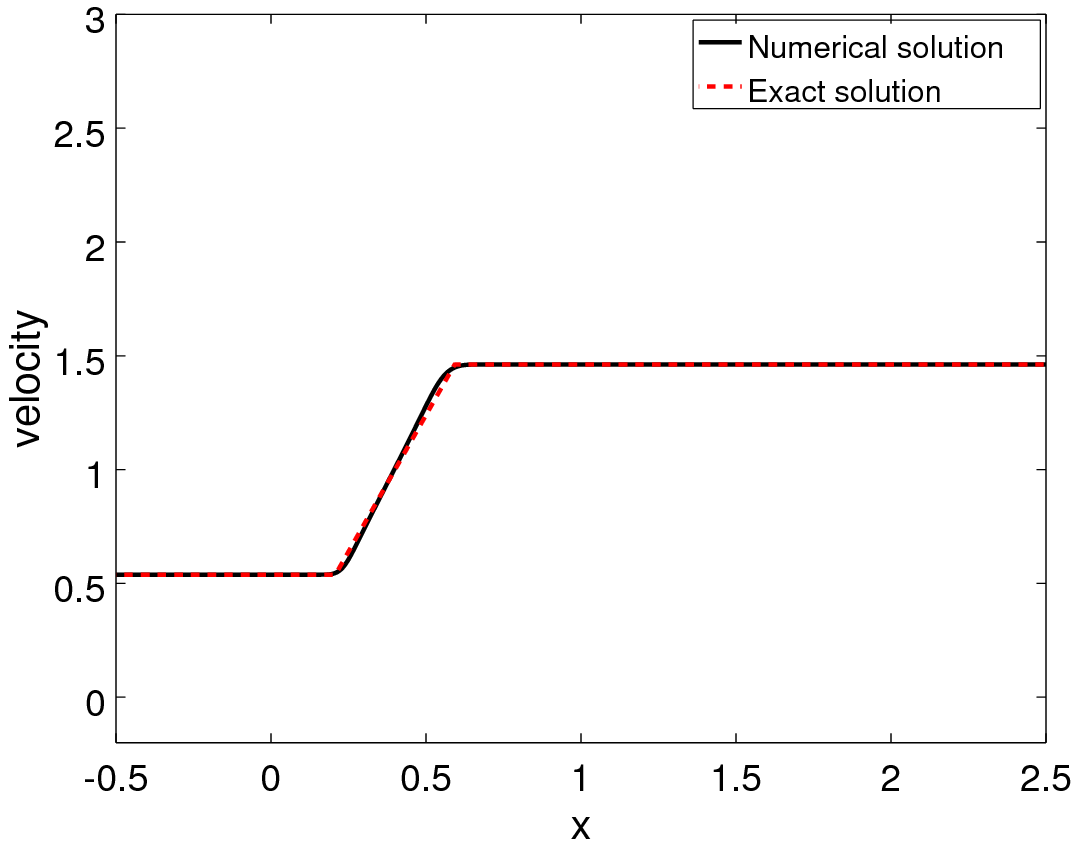}&\includegraphics[scale=0.42]{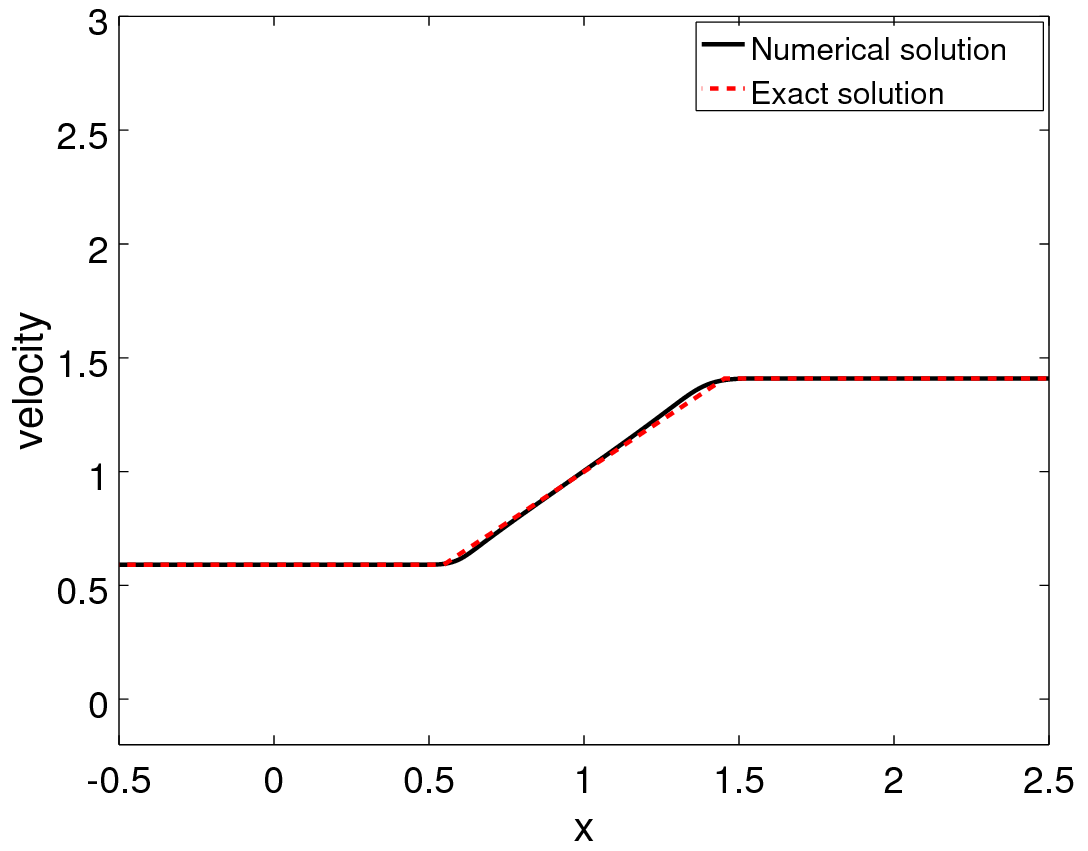}\\
$t=0.4$&$t=1$
\end{tabular}
\caption{A two-contact-discontinuity solution with a vacuum state of system \eqref{DropModSimp}: Exact and numerical solutions for two different times. $\mu=0.2$, $u_a=1$, $\Delta x=10^{-3}$ and $\Delta t=10^{-4}$.
\label{VacuumFig}}
\end{figure}
Note that in the above test cases and in the following, the displayed volume fraction is rescaled ($\times 100$).

In both cases the numerical results are in complete agreement with the theoretical analysis.

\subsection{Impact of the source term}
Recall that if $\mu=0$, i.e.\ there is no source term then system \eqref{DropModSimp} can be seen as the zero-pressure gas dynamics system whose  Riemann problem was solved in \cite{Sheng}.  We wish to highlight the impact of the zeroth order source term on the Riemann solution. We take $\mu=4$. Numerical results without ($\mu=0$) and with ($\mu=4$) the source term, computed with the initial conditions  \eqref{DeltaShockInitCond} and \eqref{VacuumInitCond}, are displayed in Figure~\ref{InfluenceSourceTermDeltaShockFig} and Figure~\ref{InfluenceSourceTermVacuumFig}, respectively. The solutions shown are obtained numerically, hence the delta-shocks can only have limited amplitude. The amplitude of the delta-shocks goes to infinity  as the mesh is refined. We notice that the zeroth order source term has significant impact on the solution. In fact, it acts as a relaxation term by weakening the delta-shock (seen as difference of amplitude in the numerical solution for the volume fraction) in Figure~\ref{InfluenceSourceTermDeltaShockFig}, and reducing the  extent of the vacuum region in Figure~\ref{InfluenceSourceTermVacuumFig}.  The left and right states of the velocity are no longer constant over time and tend to the air velocity which behaves as an equilibrium point. 
\begin{figure}[!h]
\centering
\begin{tabular}{cc}
\includegraphics[scale=0.45]{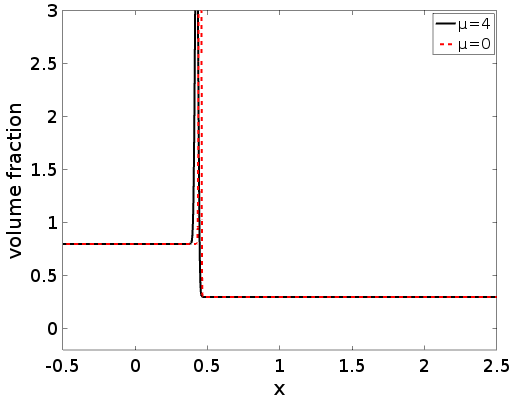} &\includegraphics[scale=0.45]{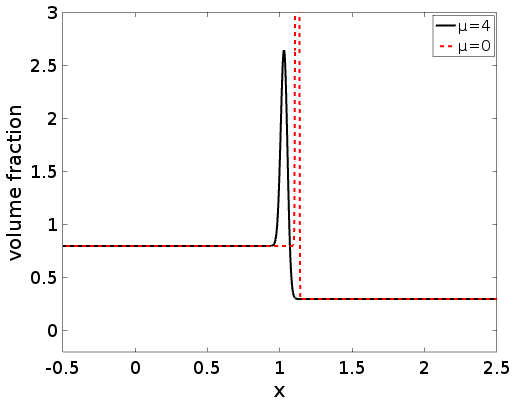}\\
\includegraphics[scale=0.45]{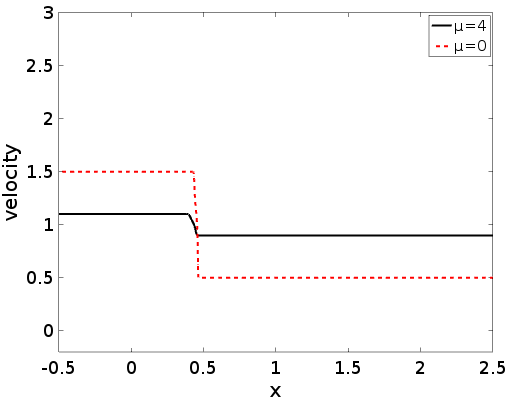} &\includegraphics[scale=0.45]{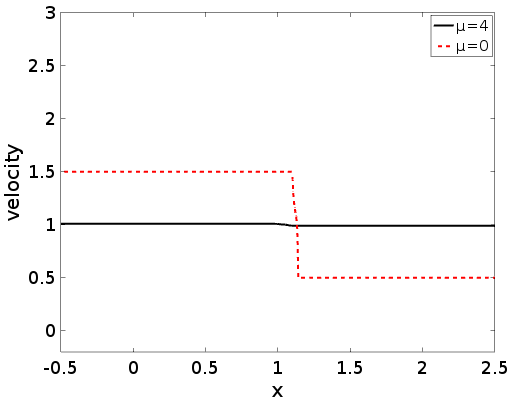}\\
$t=0.4$&$t=1$
\end{tabular}\\
\caption{Evolution of a delta-shock wave of system \eqref{DropModSimp}. Solutions  with ($\mu=4$)/without ($\mu=0$) the source term. $u_a=1$, $\Delta x=10^{-3}$ and $\Delta t=10^{-4}$.}
\label{InfluenceSourceTermDeltaShockFig}
\end{figure}
\begin{figure}[!h]
\centering
\begin{tabular}{cc}
\includegraphics[scale=0.45]{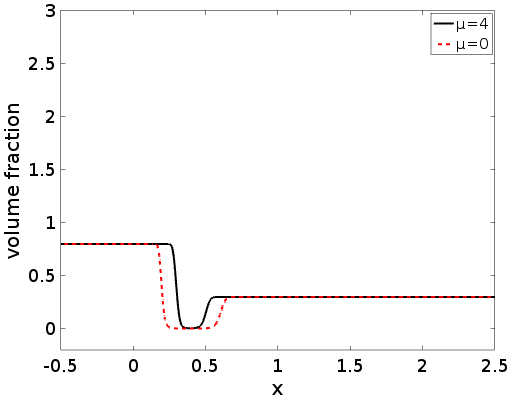} &\includegraphics[scale=0.45]{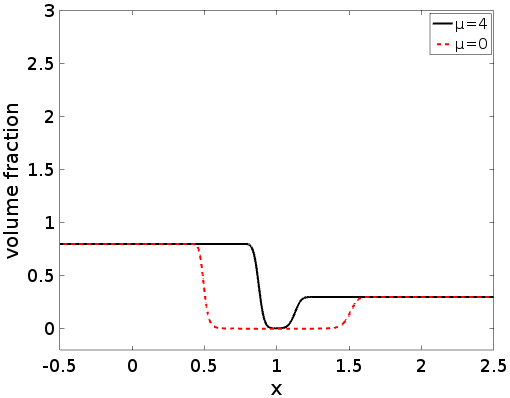}\\
\includegraphics[scale=0.45]{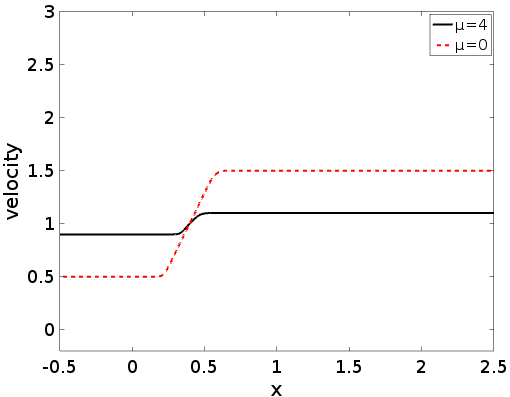} &\includegraphics[scale=0.45]{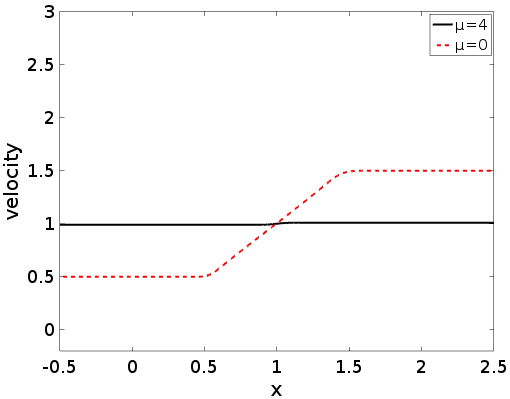}\\
$t=0.4$&$t=1$
\end{tabular}\\
\caption{Evolution of a vacuum state of system \eqref{DropModSimp}. Solutions  with ($\mu=4$)/without ($\mu=0$) the source term. $u_a=1$, $\Delta x=10^{-3}$ and $\Delta t=10^{-4}$.}
\label{InfluenceSourceTermVacuumFig}
\end{figure}

\section*{Acknowledgement}
This work was supported through a NSERC Discovery Grant.  The first author is a recipient of an ``International Admission Scholarship" and a  ``Bourse d'\'etudes pour la francophonie internationale" of the University of Ottawa for which we are deeply grateful.
\newpage
\bibliographystyle{plain} 
\bibliography{biblio}

\end{document}